\def\div{\mathop{\rm div}}
\def\eps{\varepsilon}
\def\d{d}
\def\div{{\rm div}}
\def\dt{\partial_t}
\def\ddt{\tfrac{d}{dt}}
\def\RR{\mathbb{R}}
\def\div{{\rm div}}
\newtheorem{theorem}{Theorem}
\newtheorem{lemma}{Lemma}
\newtheorem{corollary}{Corollary}
\newtheorem{assumption}{Assumption}
\begin{document}
\title[Uniqueness of nonlinear diffusion coefficients]{On the uniqueness of nonlinear diffusion coefficients in the presence of lower order terms}
\author[H. Egger]{Herbert Egger$^\dag$}
\thanks{$^\dag$AG Numerical Analysis and Scientific Computing, Department of Mathematics, TU Darmstadt, Dolivostr. 15, 64293 Darmstadt, Germany. e-mail: {\tt egger@mathematik.tu-darmstadt.de} }
\author[J.-F. Pietschmann]{Jan-Frederik Pietschmann$^*$}
\thanks{$^*$Institut f\"ur Numerische und Angewandte Mathematik, Westf\"alische Wilhelms-Universit\"at (WWU) M\"unster. Einsteinstr. 62, D 48149 M\"unster, Germany. e-mail: {\tt jan.pietschmann@wwu.de}  }
\author[M. Schlottbom]{Matthias Schlottbom$^\diamond$}
\thanks{$^\diamond$ {D}epartment of {A}pplied {M}athematics, {U}niversity of {T}wente,
P.O. Box 217, 7500 AE Enschede, The Netherlands.
E-mail: {\tt m.schlottbom@utwente.nl}
}

\begin{abstract}
We consider the identification of nonlinear diffusion coefficients of the form $a(t,u)$ or $a(u)$ in quasi-linear parabolic and elliptic equations. Uniqueness for this inverse problem is established under very general assumptions using partial knowledge of the Dirichlet-to-Neumann map.
The proof of our main result relies on the construction of a series of appropriate Dirichlet data and test functions with a particular singular behavior at the boundary. 
This allows us to localize the analysis and to separate the principal part of the equation from the remaining terms. 
We therefore do not require specific knowledge of lower order terms or initial data which allows 
to apply our results to a variety of applications. This is illustrated by discussing some typical examples in detail.
\end{abstract}


\maketitle

\section{Introduction}
Consider the inverse problem of identifying $a=a(t,u)$ for $0 \le t \le T$ and $\underline g \le u \le \overline g$ in a second order quasi-linear parabolic differential equation of the form
\begin{align} \label{eq:par}
   - \div ( a(t,u) \nabla u + b(x,t,u)) + c(x,t,u,\nabla u) &= \ddt d(t,u) .
\end{align}
The equation shall hold on a bounded domain $\Omega \subset \RR^d$, $d=2,3$ with piecewise smooth boundary $\partial \Omega$ and for all $0<t<T$. 
We assume that for any choice of boundary values
\begin{align}
  u &= g \qquad\text{on } \partial\Omega \times (0,T),\label{eq:bc}
\end{align}
in an appropriate class $G$ of Dirichlet data 
and for coefficient functions $a$, $b$, $c$, $d$ satisfying some structural properties,
there exists a bounded weak solution with initial values 
\begin{align} \label{eq:ic} 
u(0)=u_0 \qquad \text{in } \Omega. 
\end{align}
Further, we assume to have access to additional measurements of the boundary fluxes 
\begin{align}\label{eq:flux}
 j = n \cdot (a(t,u) \nabla u + b(x,t,u)) \qquad\text{on } \Gamma_M\times (0,T),
\end{align}
on some non-trivial smooth part $\Gamma_M \subset \partial \Omega$ of the boundary.
As usual, $n$ here denotes the outward unit normal vector on $\partial\Omega$.
Our main result for this parabolic problem is that partial information $\{(g,j(g))\}_{g \in G}$ of the Dirichlet-to-Neumann map uniquely determines the diffusion coefficient $a(t,u)$ for $0 \le t \le T$ and $\underline g \le u  \le \overline g$. More precisely, 
\begin{align} \label{eq:result1}
\text{if } a_1 \not\equiv a_2 \text{ on } (0,T) \times (\underline g,\overline g),
\quad \text{then there exists } g \in G: j_1 \not\equiv j_2 \text{ on } \Gamma_M \times (0,T).
\end{align}
By $j_i$, $i=1,2$, we denote the boundary fluxes \eqref{eq:flux} 
for solutions $u_i$ of \eqref{eq:par}--\eqref{eq:ic} with parameter functions 
$a_i$, $b_i$, $c_i$, $d_i$, and initial values $u_{i,0}$, respectively. 
No detailed knowledge of the lower order terms or the initial values will be required for the proof of the above assertion. 
The reverse statement that $a_1 \equiv a_2$ implies $j_1 \equiv j_2$ 
of course needs additional assumptions. This fact and the identification of $a(u)$ in the corresponding elliptic problem will also be discussed in detail later.

Uniqueness of unknown parameters in partial differential equations is one of the main research topics in the field of inverse problems. The most prominent example probably is Calder{\'o}n's problem \cite{Calderon80}, where one aims to reconstruct an unknown spatially varying conductivity $a=a(x)$ in the elliptic equation $\div(a(x) \nabla u) = 0$ from observation of the full or partial Dirichlet-to-Neumann map; we refer to \cite{Isakov93b,KenigSalo2014,Uhlmann2009} for comprehensive reviews. 

The identification of $a=a(u)$ in the quasilinear elliptic equation $-\div (a(u) \nabla u)=0$ has been investigated by Cannon \cite{Cannon1967}, who gave a constructive proof for the determination of the coefficient from knowledge of a single measurement of $u$ along a curve on $\partial \Omega$. A stable numerical method for the problem has been proposed in \cite{EPS2014b}.
%
Simultaneous identification of two parameters $a$ and $c$ in the elliptic equation $-\div(a(u) \nabla u) + c(x) u=0$ has been considered in \cite{EPS2014a}.
A survey of further related results can be found in \cite[Ch.~5]{Isakov06}.

The identification of parameters in nonlinear parabolic problems from boundary measurements was considered in Cannon and DuChateau \cite{Cannon1973} which initiated a series of papers treating the spatially one-dimensional case \cite{Cannon1980,Cannon1989,DuChateau1981,DuChateau2004}. These results are based on monotonicity properties of the solution and use {\em adjoint methods} to prove uniqueness of the inverse problem with overdetermined boundary data; see also \cite{Cortazar1990} where the nonlinearity is allowed to degenerate.
A special case is treated by Lorenzi \cite{Lorenzi1986}, who assumes that $a(u)$ is already known on some interval which lies in the range of the initial datum.
The identification of nonlinear lower order terms $c=c(u)$, $c=c(x,u)$, or $c=c(u,\nabla u)$ in parabolic problems has been investigated in \cite{DuChateauRundell85,Isakov93,Isakov01}.
For an extensive overview of available results and further references on parameter identification in partial differential equations from single and multiple boundary measurements, see e.g. \cite{Isakov06,KlibanovTimonov04,Yamamoto09}.

The proof of the main result of this manuscript is based on the following rationale:
We start from a variational formulation of the problem~\eqref{eq:par}--\eqref{eq:bc}.
Due to its special form, the principal part can be reduced to a boundary integral 
if harmonic test functions are used in this variational principle.
%
We then construct a sequence $\varphi^\eps$ of harmonic test functions with a particular singular behavior in the limit $\eps\to 0$ and choose appropriate Dirichlet boundary data $g^\eps$ which vary locally around points where the right-hand side of \eqref{eq:result1} holds true. 
When inserting the corresponding solutions and test functions into the variational principle, 
one can see that the lower order terms scale differently with respect to $\eps$ compared to the principal part. The latter term however is localized at the boundary and can be fully controlled,
which allows us to prove the validity of \eqref{eq:result1} by contradiction.
The main arguments used in our proofs are rather general and allow us to extend 
the above results in various directions. Some particular results and examples for 
applications will be presented at the end of the manuscript.

Let us mention that singular functions have already been used successfully for uniqueness and stability proofs before, e.g., by Alessandrini~ \cite{Alessandrini90} in the context of the Calder\'on problem and by von~Harrach \cite{Harrach09}, who constructed particular functions via the unique continuation principle, to obtain identifiability results for the Calder\'on problem and a related problem with additional lower order term $c=c(x)u$.
\medskip

This paper is organized as follows: 
In Section~\ref{sec:ass}, we first introduce our basic assumptions and then 
formulate in detail our main result concerning the parabolic problem discussed above. 
In Section~\ref{sec:auxiliary}, we construct the singular test functions and derive some auxiliary estimates and in Section~\ref{sec:proof}, we complete the proof our main assertion. 
In Section~\ref{sec:additional}, we establish the converse implication and state the corresponding results for the elliptic problem. In addition, we discuss possible extensions of our results concerning our assumptions on the geometry, the regularity of parameters, or the boundary conditions.
To illustrate the applicability of our approach, we discuss in Section~\ref{sec:examples} 
possible applications, including parabolic problems in bioheat transfer, systems of coupled equations arising in chemotaxis and urban crime modeling, as well as  systems describing electron migration in semi-conductors and nanopores.

\section{Assumptions and main result} \label{sec:ass}

Let us start by introducing a few general assumptions that will be utilized for our analysis throughout the text. 
The first assumption concerns the geometric setting.
\begin{assumption}\label{ass:1}
  $\Omega\subset\RR^d$, $d=2,3$ is a bounded Lipschitz domain and 
  $\Gamma_M' \subset \partial\Omega$ is an open smooth part of the boundary with $|\Gamma_M'| \ne 0$, 
  i.e., there exists $\bar x \in \Gamma_M'$ and $\eps_0>0$ such that $B_{\eps_0}(\bar x + \eps_0 n(\bar x)) \cap \Omega = \emptyset$ and $\Gamma_M = B_{\eps_0}(\bar x) \cap \partial\Omega \subset \Gamma_M'$. 
  We further assume that $\Gamma_M$ is flat, i.e., $n(x) = n(\bar x)=e_d$ on $\Gamma_M$ with $e_d$ being the $d$-th unit vector; see Figure~\ref{fig:geometry} for a sketch. 
\end{assumption}
The last assumption is made for convenience of notation and can be relaxed to $\Gamma_M$ being of class $C^1$ by the usual localization argument.
With the second assumption, we introduce some general conditions on the parameter functions that appear in the parabolic problem under consideration.
\begin{assumption}\label{ass:2}
The parameters $a,b,c,d$ lie in $W^{1,\infty}$ with norm bounded by $C_A>0$ and $0 < \underline a \le a(t,u) \le \overline a$ for some $\underline a, \overline a>0$. Moreover, $u_0 \in L^2(\Omega)$ with $\|u_0\|_{L^2(\Omega)} \le C_0$.
\end{assumption}
Again, some of these assumptions can be relaxed considerably, which will become clear from the proofs.
Since the parabolic problem under investigation is rather general, we assume for the moment the existence of solutions and uniform bounds. The validity of these mild assumptions has of course to be verified when considering particular applications. 
\begin{assumption} \label{ass:3}
Let Assumption~\ref{ass:1} and \ref{ass:2} hold. Then for any Dirichlet datum 
$$
g \in  G:=\{g \in H^1(0,T;H^1(\partial\Omega)) : \underline g \le g \le \overline g \quad \text{and} \quad \|g\|_{H^1(0,T;H^1(\partial\Omega))} \le C_G\}
$$
with parameters $\underline g$, $\overline g$, and constant $C_G$, 
there exists $u \in L^2(0,T;H^1(\Omega)) \cap C([0,T];L^2(\Omega))$ satisfying \eqref{eq:bc} and \eqref{eq:ic} in the sense of traces and \eqref{eq:par} in the sense of distributions, i.e., 
\begin{align}\label{eq:weak}
  \int_0^T (a(t,u)\nabla u + b(x,t,u), \nabla \varphi)_\Omega + ( c(x,t,u,\nabla u), \varphi)_\Omega dt = -\int_0^T (d(t,u), \partial_t \varphi)_\Omega dt
\end{align}
for all 
$\phi \in H_0^1(0,T;H^1_0(\Omega))$; 
as usual, $(u,v)_{\Omega}$ denotes the scalar product of $L^2(\Omega)$ here. 
Moreover, any such weak solution of \eqref{eq:par}--\eqref{eq:bc} is bounded uniformly by
\begin{align}\label{eq:apriori}
  \|u\|_{L^2(0,T;H^1(\Omega))} \leq C_U
\end{align}
with a constant $C_U=C_U(\underline g,\overline g,C_A,C_G,C_0,\Omega)$ that is independent of the particular choice of the coefficients, of the initial value, and of the Dirichlet datum $g \in G$. 
\end{assumption}
Here and below we use standard notation for function spaces,  
in particular, $H^{-1}(\Omega) = H_0^1(\Omega)'$ is the dual space of $H_0^1(\Omega)$, and $H^{s}(0,T;X)$ denotes the appropriate Bochner space of functions $f:(0,T) \to X$ with values in some Banach-space $X$; let us refer to \cite{Evans98} for details.
The above assumptions on the parameters allow us to define for any bounded weak solution $u \in L^2(0,T;H^1(\Omega))$ the Neumann flux or generalized co-normal derivative
$j=n \cdot (a(t,u) \nabla u + b(x,t,u))$ as a linear functional on $H_0^1(0,T;H^1(\Omega))$ via
\begin{align}\label{eq:conormal}
\int_0^T\langle j,\varphi\rangle_{\partial \Omega} dt
:=  \int_0^T (a(t,u)\nabla u &+ b(x,t,u), \nabla\varphi)_\Omega \\
&+ (c(x,t,u,\nabla u),\varphi)_\Omega + (d(t,u),\dt \varphi)_\Omega dt,\nonumber
\end{align}
for all $\varphi\in  H^1_0(0,T;H^1(\Omega))$.
Using Assumptions~\ref{ass:1}--\ref{ass:3}, one can directly see that 
\begin{align} \label{eq:flux_estimate}
  &\int_0^T\langle j,\varphi\rangle_{\partial \Omega} dt
  \leq  C_A (3+C_U) \|\varphi\|_{H^1(0,T;H^1(\Omega))}.
\end{align}
which establishes a uniform bound for $j$ in the norm of linear functionals on $H_0^1(0,T;H^1(\Omega))$.
Using standard convention, we say that $j \equiv 0$ on $\Gamma_M\times (0,T)$, 
if 
\begin{align*}
	  &\int_0^T\langle j,\varphi\rangle_{\partial \Omega} dt=0
\end{align*}
holds for all $\varphi\in H^1_0(0,T;H^1(\Omega))$ with $\varphi \equiv 0$ on $ \partial\Omega \setminus \Gamma\times (0,T)$ in the sense of traces. 
Accordingly, we have $j_1\equiv j_2$ on $\Gamma_M\times (0,T)$ if $j_1-j_2\equiv0$ on $\Gamma_M\times (0,T)$.

\medskip 

We are now in the position to state our main result in a rigorous manner.
\begin{theorem} \label{thm:1}
Let Assumption~\ref{ass:1} hold and let $a_i$, $b_i$, $c_i$, $d_i$, and $u_{0,i}$, for $i=1,2$
satisfy Assumption~\ref{ass:2}. For $g \in G$, let $u_i(g)$
be corresponding weak solutions of \eqref{eq:par}--\eqref{eq:ic} in the sense of Assumption~\ref{ass:3} and let $j_i(g)$ denote the corresponding Neumann fluxes.
Assume that $a_1(\tilde t,\tilde g) \neq a_2(\tilde t,\tilde g)$ for some $0 < \tilde t < T$ and $\underline g \le \tilde g \le \overline g$, then there exists a Dirichlet datum $g \in G$ such that $j_1(g) \not \equiv j_2(g)$ on $\Gamma_M\times (0,T)$.
\end{theorem}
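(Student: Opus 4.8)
The plan is to test the difference of the two weak formulations with \emph{harmonic} functions, which collapses the principal part to the boundary, and then to probe the resulting boundary term by a family of test functions and Dirichlet data that concentrate near the point $\bar x\in\Gamma_M$. First I would introduce the Kirchhoff-type primitive $A_i(t,u)=\int_0^u a_i(t,s)\,ds$, so that $a_i(t,u_i)\nabla u_i=\nabla A_i(t,u_i)$ and the difference of the flux identities \eqref{eq:conormal} for the two data sets reads
\[
\int_0^T\langle j_1-j_2,\varphi\rangle_{\partial\Omega}\,dt
=\int_0^T(\nabla A_1(t,u_1)-\nabla A_2(t,u_2),\nabla\varphi)_\Omega\,dt + R(\varphi),
\]
where $R(\varphi)$ collects the lower order contributions involving $b_i$, $c_i$, and $d_i$. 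If $\varphi$ is harmonic in space, integration by parts turns the principal term into a pure boundary integral $\int_0^T\langle D(t,g),\partial_n\varphi\rangle_{\partial\Omega}\,dt$ with $D(t,g)=\int_0^g(a_1(t,s)-a_2(t,s))\,ds$, using $u_1=u_2=g$ on $\partial\Omega$; crucially, all interior information about the (unknown) solutions drops out.

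Next I would construct the singular test functions. Fixing the pole $y^\eps=\bar x+\eps e_d$ just outside $\Omega$ (via the exterior ball from Assumption~\ref{ass:1}), I set $\varphi^\eps(x,t)=\theta(t)\,\chi(x)\,\eps^\gamma\,\partial_{e_d}^m\Phi(x-y^\eps)$, where $\Phi$ is the fundamental solution of the Laplacian, $\chi$ is a fixed cutoff equal to one near $\bar x$ and supported so that $\varphi^\eps$ vanishes on $\partial\Omega\setminus\Gamma_M$, $\theta\in H_0^1(0,T)$ localizes around $\tilde t$, and $m,\gamma$ are to be chosen. Each $\varphi^\eps$ is admissible and harmonic up to the cutoff, and the pole lies outside $\bar\Omega$, so $\varphi^\eps$ is smooth; the singular behavior appears only in the limit $\eps\to0$. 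The key is a separation of scales: for $m$ large and $\gamma$ adjusted to the dimension, the interior norms $\|\varphi^\eps\|_{L^2(\Omega)}$, $\|\nabla\varphi^\eps\|_{L^1(\Omega)}$, and $\|\varphi^\eps\|_{L^1(\Omega)}$ tend to zero, so that $R(\varphi^\eps)\to0$ by the uniform a~priori bound \eqref{eq:apriori} together with the boundedness of the solutions (which renders $b_i,d_i\in L^\infty$ and $c_i\in L^2$), while the normal derivative $\partial_n\varphi^\eps$ concentrates at $\bar x$. The cutoff error $(A_1-A_2,\Delta\varphi^\eps)_\Omega$ is supported where $\nabla\chi\neq0$, i.e.\ away from the pole, and is $O(\eps^\gamma)\to0$.

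The subtle point is that harmonicity forces $\int_{\partial\Omega}\partial_n\varphi^\eps\,dS=0$, so the concentrated profile has vanishing total mass and a \emph{fixed} boundary datum would pair to zero; the boundary term cannot simply reproduce a point value of $D$. I would therefore let the Dirichlet datum concentrate as well, taking $g^\eps=\tilde g+\alpha\,H((x-\bar x)/\eps)$ near $\bar x$ and $g^\eps\equiv\tilde g$ on the remainder of $\partial\Omega$, with amplitude $\alpha$ small enough that $g^\eps\in G$. The constant part of $D(t,g^\eps)$ then pairs to zero, and the matched width lets the pairing resolve the sign-changing concentration profile, producing in the limit a nonzero weighted average of $D(t,\tilde g+\alpha H(\cdot))$. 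Expanding in $\alpha$ and sending $\alpha\to0$ isolates the linear term proportional to $(a_1-a_2)(t,\tilde g)$. Under the contradiction hypothesis $j_1\equiv j_2$ on $\Gamma_M$ this forces $\int_0^T\theta(t)(a_1-a_2)(t,\tilde g)\,dt=0$ for all admissible $\theta$, hence $(a_1-a_2)(\tilde t,\tilde g)=0$ after localizing $\theta$ at $\tilde t$, contradicting the assumption.

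I expect the main obstacle to be precisely this simultaneous engineering of $\varphi^\eps$ and $g^\eps$: selecting $m$ and $\gamma$ so that every lower order term and the cutoff error vanish while the principal boundary term survives with a definite nonzero limit, and compensating the zero-mean constraint on $\partial_n\varphi^\eps$ by the matched concentration of the boundary data. The careful bookkeeping of the different $\eps$-powers in the $L^1$- and $L^2$-type estimates for $b_i$, $c_i$, $d_i$ (and hence the use of boundedness of the solutions), together with the final $\alpha\to0$ linearization that passes from the primitive $D$ back to $a_1-a_2$ pointwise, are the places where most of the work will lie.
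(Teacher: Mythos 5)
Your strategy is essentially the one the paper uses: pass to the Kirchhoff primitives so that the principal part collapses, via harmonic test functions, to a boundary pairing with $\partial_n\varphi$; build the test functions from derivatives of $\Phi$ with pole $\bar x+\eps n(\bar x)$ outside $\Omega$; localize the Dirichlet data at the matched scale $\eps$ around $\bar x$; and win by a separation of scales between the boundary term and the lower-order remainder (this is exactly Lemma~\ref{lem:identity} plus Lemmas~\ref{lem:dirichlet}--\ref{lem:estimates}). Your cutoff trick for killing the $\langle j_1-j_2,\varphi\rangle$ term is a legitimate variant of the paper's splitting $\lambda^\eps_{\bar x}=\chi^{\eps_0}_{\bar x}\lambda^\eps_{\bar x}+(1-\chi^{\eps_0}_{\bar x})\lambda^\eps_{\bar x}$. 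However, two steps in your plan have genuine gaps.

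First, admissibility of your data family in $d=2$. Since $\|\alpha H((\cdot-\bar x)/\eps)\|_{H^1(\partial\Omega)}\sim\alpha\,\eps^{(d-3)/2}$, keeping $\alpha$ fixed while $\eps\to0$ forces $g^\eps$ out of the set $G$ when $d=2$, so the uniform bound \eqref{eq:apriori} (which your estimate of the remainder $R(\varphi^\eps)$ relies on) no longer applies. You must couple $\alpha\sim\eps^{1/2}$, and this destroys your order of limits ``first $\eps\to0$ to get a finite nonzero limit, then $\alpha\to0$ to linearize $D$''. The paper's Lemma~\ref{lem:dirichlet} builds exactly this factor $\eps^{(3-d)/2}$ into the amplitude and then never linearizes at all: continuity of the $a_i$ (Assumption~\ref{ass:2}) gives $a_1-a_2\ge\eta$ on a whole rectangle $(t_1,t_2)\times(g_1,g_2)$, hence $A_1(t,g^\eps)-A_2(t,g^\eps)\ge\eta\,(g^\eps-g_1)\ge 0$ by monotonicity, and the contradiction comes from divergence of rates, $\eps^{(1-d)/2}$ against $|\ln\eps|$ (Lemma~\ref{lem:estimates}), not from a finite limiting identity. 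Second, the nonvanishing of your limiting pairing between $H$ and the concentrated trace profile---which you correctly flag as the crux---is precisely the content of Lemma~\ref{lem:estimate_dn_lambda}, and it is clean only for $m=1$: the dipole trace $\partial_n\lambda^\eps_{\bar x}$ is pointwise nonnegative on $\Gamma_M\cap B_\eps(\bar x)$ with mass $\gtrsim 1/\eps$, so pairing it with a nonnegative bump supported there and with $D\ge0$ produces a definite sign; the zero-mean obstruction is defeated by positivity, not by any delicate cancellation bookkeeping. Your suggestion to take ``$m$ large'' is counterproductive: higher derivatives of $\Phi$ have traces that are not of one sign on the matched ball, so the nondegeneracy you need would become harder, while $m=1$ together with any fixed $\gamma\in(0,1]$ already sends all interior norms and the cutoff error to zero (by Lemma~\ref{lem:fundamental}). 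A small further correction: the $L^\infty$ control of $b_i,c_i,d_i$ comes from Assumption~\ref{ass:2} (the coefficient functions themselves are bounded), not from boundedness of the solutions, which Assumption~\ref{ass:3} does not provide.
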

Some extensions of this result will be stated in Section~\ref{sec:additional}. 
In the proof of Theorem~\ref{thm:1}, which is presented in Section~\ref{sec:proof}, 
we will use particular test functions $\varphi$ in the definition of the co-normal derivative \eqref{eq:conormal} with a very specific singular behavior at the boundary. 
The next section presents some auxiliary results required for the construction of these functions.
\section{Auxiliary results} \label{sec:auxiliary}

Let $\Phi$ denote the fundamental solution of the Laplace equation, i.e., 
\begin{align*}
\Phi(x) = \begin{cases} -\frac{1}{2\pi} \log |x|, & d = 2, \\
\frac{1}{4\pi} \frac{1}{|x|}, & d=3. \end{cases}
\end{align*}
For $\bar{x} \in \partial\Omega$ and $\eps_0>0$ as defined Assumption~\ref{ass:1}, 
and for any $0 < \eps \le \eps_0$, we define 
\begin{align}\label{eq:lambda}
 \lambda^\eps_{\bar x}(x) = n(\bar x)\cdot \nabla \Phi(x-\bar x^\eps), \qquad \bar x^\eps = \bar x + \eps n(\bar x)
\end{align}
for all $x \ne \bar x + \eps n(\bar x)$, and we set $\lambda^\eps_{\bar x}(\bar x^\eps) = 0$ for completeness. 
%
By construction and Assumption~\ref{ass:1}, we have $\bar x^\eps \notin \Omega$ for $0 < \eps \le \eps_0$, and hence $\lambda^\eps_{\bar x}$ is a smooth function in $\Omega$. 
\begin{lemma}\label{lem:fundamental}
For every $0 < \eps \le \eps_0$, we have $\lambda_{\bar x}^\eps \in C^\infty(\overline \Omega)$ and $\Delta \lambda_{\bar x}^\eps(x) = 0$ for all $x \in \Omega$. 
In addition, there exists a constant $C_L=C_L(p,\Omega,\eps_0) > 0$ independent of $\eps$ such that
 \begin{align*}
   \|\lambda^\eps_{\bar x}\|_{L^p(\Omega)} \leq  C_L
    \begin{cases} 
	1,                            & p<d/(d-1),\\ 
	|\ln(\eps)|^{\frac{1}{p}},    & p=d/(d-1),\\
	\eps^{1+\frac{d}{p}-d},       & p>d/(d-1).
    \end{cases}
 \end{align*}
The gradient of $\lambda_{\bar x}^\eps$ can further be estimated by 
 \begin{align*}
   \|\nabla \lambda^\eps_{\bar x}\|_{L^p(\Omega)} \leq  C_L
    \begin{cases} 
	|\ln(\eps)|,          & p=1,\\
	\eps^{\frac{d}{p}-d}, & p>1.
    \end{cases}
 \end{align*}
\end{lemma}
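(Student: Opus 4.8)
The plan is to combine three ingredients: a smoothness/harmonicity observation, pointwise decay bounds for $\lambda^\eps_{\bar x}$ and its gradient coming from the explicit form of $\Phi$, and a geometric lower bound on $|x-\bar x^\eps|$ that converts these pointwise bounds into a single scalar radial integral whose behaviour can be read off by elementary calculus.

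First I would record that the source point $\bar x^\eps = \bar x + \eps n(\bar x)$ lies strictly outside $\overline\Omega$. Indeed, by Assumption~\ref{ass:1} the open ball $B_{\eps_0}(\bar x + \eps_0 n(\bar x))$ is disjoint from $\Omega$, and for $0<\eps\le\eps_0$ the point $\bar x^\eps$ lies in this ball (its distance to the center is $\eps_0-\eps<\eps_0$); since the ball is an open neighbourhood of $\bar x^\eps$ disjoint from $\Omega$, we get $\bar x^\eps\notin\overline\Omega$. As $y\mapsto\Phi(y)$ is smooth and harmonic away from the origin, the shifted map $x\mapsto\Phi(x-\bar x^\eps)$ is smooth and harmonic in a neighbourhood of $\overline\Omega$, and hence so is its directional derivative $\lambda^\eps_{\bar x}=n(\bar x)\cdot\nabla\Phi(\,\cdot-\bar x^\eps)$. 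This yields $\lambda^\eps_{\bar x}\in C^\infty(\overline\Omega)$ with $\Delta\lambda^\eps_{\bar x}=0$ in $\Omega$. Next, from $\nabla\Phi(y)=-\tfrac{1}{\omega_d}\,y/|y|^d$ (with $\omega_d$ the area of the unit sphere) one obtains the pointwise bound $|\lambda^\eps_{\bar x}(x)|\le\tfrac{1}{\omega_d}|x-\bar x^\eps|^{-(d-1)}$, and differentiating once more (the Hessian $D^2\Phi(y)$ scales like $|y|^{-d}$) gives $|\nabla\lambda^\eps_{\bar x}(x)|\le C|x-\bar x^\eps|^{-d}$.

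The crux of the argument is a uniform geometric estimate: there is $c=c(\eps_0)>0$ with $|x-\bar x^\eps|\ge c(|x-\bar x|+\eps)$ for all $x\in\Omega$ and $0<\eps\le\eps_0$. Working in coordinates with $\bar x$ at the origin and $n(\bar x)=e_d$, the exterior-ball condition $B_{\eps_0}(\eps_0 e_d)\cap\Omega=\emptyset$ translates into $x_d\le|x|^2/(2\eps_0)$ for every $x\in\Omega$; inserting this into $|x-\eps e_d|^2=|x|^2-2\eps x_d+\eps^2$ gives $|x-\bar x^\eps|^2\ge(1-\eps/\eps_0)|x|^2+\eps^2$, from which the claim follows for $\eps\le\eps_0/2$ after using $a^2+b^2\ge\tfrac12(a+b)^2$. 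For $\eps\in[\eps_0/2,\eps_0]$ the singularity stays at a distance bounded below by a positive constant, since $\eps\mapsto\mathrm{dist}(\bar x^\eps,\overline\Omega)>0$ is continuous on a compact interval, so all bounds degenerate into constants and can be absorbed into $C_L$. I expect this to be the main obstacle, as it is where the flatness/exterior-ball geometry and the uniformity in $\eps$ actually enter.

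Finally, combining the pointwise bounds with this geometric estimate and the trivial upper bound $|x-\bar x^\eps|\le|x-\bar x|+\eps$, both norms reduce to a scalar integral: with $\Omega\subset B_R(\bar x)$, $R=\diam\Omega$, and passing to polar coordinates centered at $\bar x$,
\[
\|\lambda^\eps_{\bar x}\|_{L^p(\Omega)}^p \le C\int_0^R \frac{r^{d-1}}{(r+\eps)^{p(d-1)}}\,dr,
\qquad
\|\nabla \lambda^\eps_{\bar x}\|_{L^p(\Omega)}^p \le C\int_0^R \frac{r^{d-1}}{(r+\eps)^{pd}}\,dr .
\]
The substitution $r=\eps s$ turns each right-hand side into $\eps^{d-\alpha}\int_0^{R/\eps}s^{d-1}(1+s)^{-\alpha}\,ds$, with $\alpha=p(d-1)$ and $\alpha=pd$ respectively. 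Elementary estimation of this integral in the three regimes $\alpha<d$, $\alpha=d$, $\alpha>d$ gives finite, logarithmic, and power-type behaviour, and taking $p$-th roots produces exactly the stated case distinctions: for the function, $\alpha\lessgtr d$ corresponds to $p\lessgtr d/(d-1)$ (yielding the constant, $|\ln\eps|^{1/p}$, and $\eps^{1+d/p-d}$ cases), whereas for the gradient $\alpha=pd\ge d$ always, so only the borderline $p=1$ (log) and $p>1$ (the rate $\eps^{d/p-d}$) cases occur.
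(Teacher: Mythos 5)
Your proposal is correct and follows essentially the same route as the paper: smoothness and harmonicity by direct computation, the pointwise decay bounds $|\lambda^\eps_{\bar x}(x)|\le C|x-\bar x^\eps|^{1-d}$ and $|\nabla\lambda^\eps_{\bar x}(x)|\le C|x-\bar x^\eps|^{-d}$, the exterior-ball geometry of Assumption~\ref{ass:1} to keep the singularity at distance $\gtrsim\eps$ from $\Omega$, and an elementary one-dimensional radial integral producing the same three regimes. The only difference is bookkeeping: you establish the (stronger than needed) comparability $|x-\bar x^\eps|\ge c\,(|x-\bar x|+\eps)$ and integrate in polar coordinates centered at $\bar x$, whereas the paper simply notes that $B_\eps(\bar x^\eps)\subset B_{\eps_0}(\bar x^{\eps_0})$ is disjoint from $\Omega$, hence $\Omega\subset B_R(\bar x^\eps)\setminus B_\eps(\bar x^\eps)$, and evaluates $\int_\eps^R r^{(1-d)(p-1)}\,dr$ and $\int_\eps^R r^{d(1-p)-1}\,dr$ centered at the singular point itself.
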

\begin{proof}
Smoothness of $\lambda_{\bar x}^\eps$ and $\Delta \lambda_{\bar x}(x)=0$ follow by direct computation. 
By Assumption~\ref{ass:1}, we further know that $\Omega \subset B_R(\bar x^\eps) \setminus B_{\eps}(\bar x^\eps)$ for $0 < \eps \le \eps_0$; see~Figure~\ref{fig:geometry}. Hence 
\begin{align*}
\int_{\Omega} |\lambda_{\bar x}^\eps|^p dx 
&\le C'\int_{B_R(\bar x^\eps) \setminus B_\eps(\bar x^\eps)} |x-\bar x^\eps|^{(1-d)p} dx  
\le C'' \int_{\eps}^R r^{(1-d)(p-1)} dr.
\end{align*}
The estimates for the norm of $\lambda_{\bar x}^\eps$ then follow directly by integration. 
The gradient of $\lambda_{\bar x}^\eps$, on the other hand, can be estimated by $|\nabla \lambda_{\bar x}^\eps(x)| \le C' |x- \bar x^\eps|^{-d}$ and hence
\begin{align*}
\int_{\Omega} \|\nabla \lambda_{\bar x}^\eps\|^p dx 
&\le C''\int_{B_R(\bar x^\eps) \setminus B_\eps(\bar x^\eps)} |x-\bar x^\eps|^{-dp} dx  
\le C''' \int_{\eps}^R r^{d(1-p)-1} dr.
\end{align*}
The estimates for $\nabla \lambda_{\bar x}^\eps$ then again follow directly by computing this integral.
\end{proof}

\begin{figure}[ht]\label{fig:geometry}
  \providecommand\rotatebox[2]{#2}%
  \ifx\svgwidth\undefined%
    \setlength{\unitlength}{300bp}
    \ifx\svgscale\undefined%
      \relax%
    \else%
      \setlength{\unitlength}{\unitlength * \real{\svgscale}}%
    \fi%
  \else%
    \setlength{\unitlength}{\svgwidth}%
  \fi%
  \global\let\svgwidth\undefined%
  \global\let\svgscale\undefined%
  \makeatother%
  \begin{picture}(1,0.73868265)%
    \put(0,0){\includegraphics[width=\unitlength]{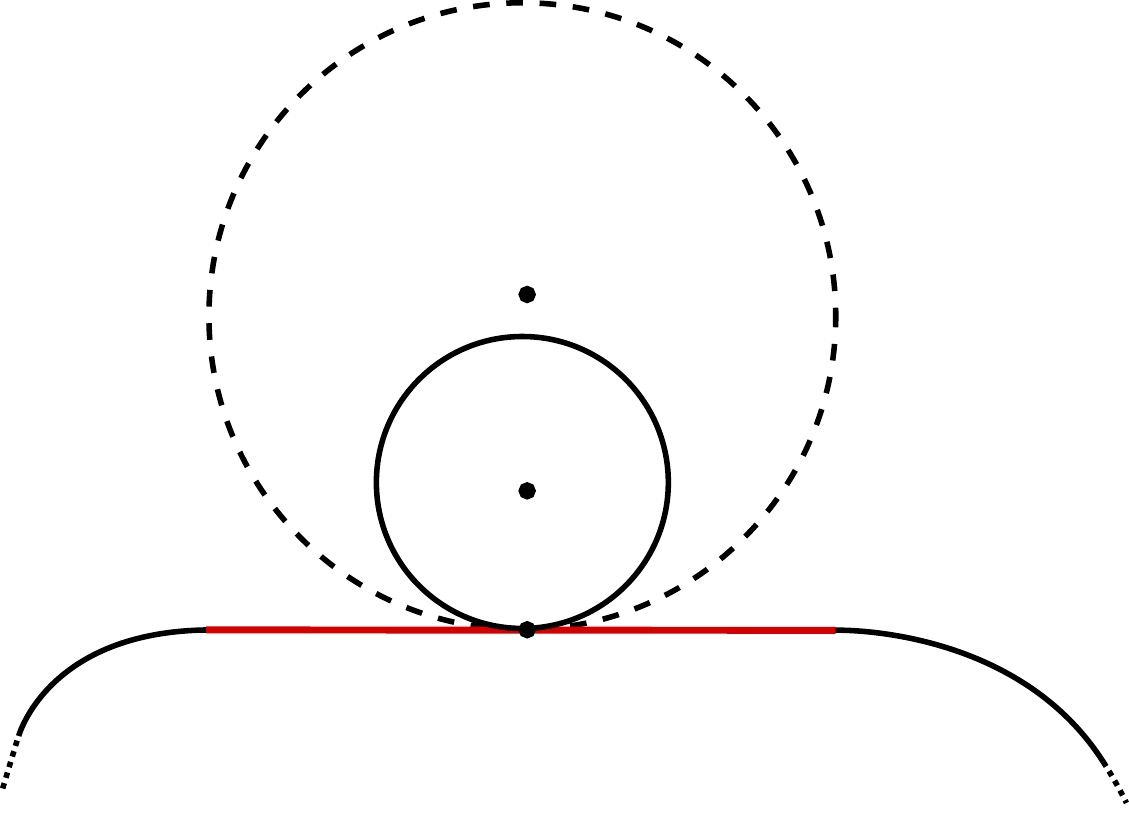}}%
    \put(0.25858704,0.41522485){$B_\eps(\bar x^\eps)$}%
    \put(0.42530482,0.48632676){$\bar x^{\eps_0}$}%
    \put(0.74211317,0.53444393){$B_{\eps_0}(\bar x^{\eps_0})$}%
    \put(0.4243913,0.30692857){$\bar x^\eps$}%
    \put(0.45127747,0.14030053){$\bar x$}%
    \put(0.6193288,0.13015073){$\Gamma_M$}%
    \put(0.088181,0.0663975){\Large$\Omega$}%
    \put(0,0){\includegraphics[width=\unitlength,page=2]{sketch_bdry.pdf}}%
    \put(0.25001361,0.1405804){$x$}%
  \end{picture}%
\caption{Sketch of the geometry near $\Gamma_M$ and the singular points $\bar x^\eps=\bar x + \eps n(\bar x)$.}
\end{figure} 

The next result describes in more detail the behavior of $\lambda^\eps_{\bar x}$ away from the singularity.
\begin{lemma}\label{lem:bound_on_lambda}
 For any $0<\eps \le \eps_0$ and all $x \in \overline\Omega$ with $|x-\bar x| \ge \eps_0/2$ 
there holds 
 \begin{align*}
  | \lambda_{\bar x}^\eps (x)| \le C_L' \qquad \text{and} \qquad |\nabla \lambda_{\bar x}^\eps (x)| \leq C_L'
 \end{align*}
with constant $C_L'=C_L'(\Omega,\eps_0)$ independent of $\eps$ and the choice of $x$.
\end{lemma}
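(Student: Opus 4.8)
The plan is to reduce both bounds to a single geometric estimate: a lower bound on the distance $|x-\bar x^\eps|$ from the evaluation point to the only singularity of $\Phi(\cdot-\bar x^\eps)$, uniform in $\eps\in(0,\eps_0]$ and in the admissible points $x$. Since $\lambda_{\bar x}^\eps(x)=n(\bar x)\cdot\nabla\Phi(x-\bar x^\eps)$ is a fixed component of $\nabla\Phi$, differentiating the fundamental solution directly gives the pointwise bounds $|\nabla\Phi(y)|\le C\,|y|^{1-d}$ and $|\nabla^2\Phi(y)|\le C\,|y|^{-d}$, whence
\[
  |\lambda_{\bar x}^\eps(x)|\le C\,|x-\bar x^\eps|^{1-d},
  \qquad
  |\nabla\lambda_{\bar x}^\eps(x)|\le C\,|x-\bar x^\eps|^{-d}.
\]
As both exponents are negative, it suffices to keep $|x-\bar x^\eps|$ bounded away from $0$, and the resulting constant will only involve $\eps_0$ and the dimension $d$.

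The hard part is exactly this distance bound, because the naive estimate $|x-\bar x^\eps|\ge|x-\bar x|-\eps\ge\eps_0/2-\eps_0$ is vacuous; I must instead exploit the exterior ball condition of Assumption~\ref{ass:1}. Writing $\bar x$ as the origin with $n(\bar x)=e_d$, the condition $B_{\eps_0}(\bar x^{\eps_0})\cap\Omega=\emptyset$ reads $|x-\bar x^{\eps_0}|\ge\eps_0$ for $x\in\overline\Omega$, which upon expansion is equivalent to the one-sided bound $(x-\bar x)\cdot n(\bar x)\le|x-\bar x|^2/(2\eps_0)$. Inserting this bound (together with $\eps>0$) into the identity $|x-\bar x^\eps|^2=|x-\bar x|^2-2\eps\,(x-\bar x)\cdot n(\bar x)+\eps^2$ yields
\begin{align*}
  |x-\bar x^\eps|^2
  \;\ge\; \Big(1-\tfrac{\eps}{\eps_0}\Big)\,|x-\bar x|^2+\eps^2
  \;\ge\; \Big(1-\tfrac{\eps}{\eps_0}\Big)\tfrac{\eps_0^2}{4}+\eps^2,
\end{align*}
where the second inequality uses $1-\eps/\eps_0\ge0$ (valid since $\eps\le\eps_0$) together with $|x-\bar x|\ge\eps_0/2$. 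Minimizing the right-hand side over $\eps\in(0,\eps_0]$, whose minimum is attained at $\eps=\eps_0/8$, then gives $|x-\bar x^\eps|^2\ge\tfrac{15}{64}\eps_0^2$, i.e. $|x-\bar x^\eps|\ge\tfrac{\sqrt{15}}{8}\,\eps_0$ uniformly in $\eps$ and in the admissible $x$.

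Substituting this uniform distance bound into the two pointwise estimates yields $|\lambda_{\bar x}^\eps(x)|\le C(\tfrac{\sqrt{15}}{8}\eps_0)^{1-d}$ and $|\nabla\lambda_{\bar x}^\eps(x)|\le C(\tfrac{\sqrt{15}}{8}\eps_0)^{-d}$; taking $C_L'$ to be the larger of these two numbers proves the claim. By construction $C_L'$ depends only on $\eps_0$ and $d$, hence is independent of $\eps$ and of the particular point $x$, as required. The only genuinely nontrivial ingredient is the geometric lower bound in the second paragraph, where the tangency of the exterior ball at $\bar x$ is what prevents points of $\overline\Omega$ lying at distance $\ge\eps_0/2$ from $\bar x$ from approaching any of the axis singularities $\bar x^\eps$.
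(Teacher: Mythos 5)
Your proof is correct, and it follows the same overall reduction as the paper: pointwise decay bounds $|\lambda_{\bar x}^\eps(x)|\le C|x-\bar x^\eps|^{1-d}$, $|\nabla\lambda_{\bar x}^\eps(x)|\le C|x-\bar x^\eps|^{-d}$ plus a uniform lower bound on the distance $|x-\bar x^\eps|$. The difference lies in how that distance bound is obtained. The paper uses a two-case triangle-inequality argument: for $\eps\le\eps_0/4$ the ``naive'' estimate $|x-\bar x^\eps|\ge|x-\bar x|-\eps\ge\eps_0/2-\eps_0/4$ is in fact \emph{not} vacuous (it is only vacuous for $\eps$ close to $\eps_0$, contrary to your remark), and for $\eps_0/4\le\eps\le\eps_0$ the exterior ball condition enters only through $|x-\bar x^{\eps_0}|\ge\eps_0$ together with $|\bar x^{\eps_0}-\bar x^\eps|=\eps_0-\eps$; this gives $|x-\bar x^\eps|\ge\eps_0/4$ in both cases. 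You instead use the exterior ball condition quantitatively for all $\eps$ at once, converting tangency at $\bar x$ into the one-sided inequality $(x-\bar x)\cdot n(\bar x)\le|x-\bar x|^2/(2\eps_0)$, inserting it into the expansion of $|x-\bar x^\eps|^2$, and minimizing over $\eps\in(0,\eps_0]$ to get $|x-\bar x^\eps|\ge\sqrt{15}\,\eps_0/8$. Your computation checks out (the minimum at $\eps=\eps_0/8$ gives $15\eps_0^2/64$, and both endpoints give larger values), so your argument buys a unified, case-free derivation with a slightly sharper constant, at the cost of more algebra; the paper's version is shorter and entirely elementary. Either way the conclusion and the dependence $C_L'=C_L'(\Omega,\eps_0)$ (in fact only on $\eps_0$ and $d$) are the same.
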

\begin{proof}
First consider the case that $\eps \le \eps_0/4$. Then
$$
|x-\bar x^\eps| \ge |x-\bar x| - |\bar x - \bar x^\eps| \ge \eps_0/2 - \eps_0/4 = \eps_0/4.
$$
Otherwise, we have $\eps_0/4 \le \eps \le \eps_0$, and thus
$$
|x-\bar x^\eps| \ge |x-\bar x^{\eps_0}| - |\bar x^{\eps_0} - \bar x^\eps|
\ge \eps_0 - (\eps_0-\eps_0/4) = \eps_0/4.
$$
Hence $|x-\bar x^\eps| \ge \eps_0/4$ in both cases.
Using the particular form of $\lambda_{\bar x}^\eps$, we therefore obtain  the bounds $|\lambda_{\bar x}^\eps| \le C |x-\bar x^\eps|^{1-d} \le C' \eps_0^{1-d}$ and $|\nabla \lambda_{\bar x}^\eps(x)| \le C |x-\bar x^\eps|^{-d} \le C' \eps_0^{-d}$.
\end{proof}

As a next step, let us also characterize in more detail the behavior of 
$\lambda_{\bar x}^\eps$ in the neighborhood of the point $\bar x\in \Gamma_M$ on the boundary.
\begin{lemma}\label{lem:estimate_dn_lambda}
There exists a constant $C_L''>0$, such that for any $0<\eps<\eps_0$  
$$
  \int_{\Gamma_M \cap B_{\eps}(\bar x)} \partial_n \lambda_{\bar x}^\eps ds(x) \geq C_L''/\eps
\qquad \text{and} \qquad 
\partial_n \lambda_{\bar x}^\eps \geq 0 \quad \text{on } \Gamma_M \cap B_{\eps}(\bar x).
$$

\end{lemma}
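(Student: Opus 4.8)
The plan is to exploit the flatness hypothesis of Assumption~\ref{ass:1} to reduce the statement to an explicit one-variable computation in convenient coordinates. I would choose coordinates so that $\bar x$ sits at the origin and $n(\bar x)=e_d$; by Assumption~\ref{ass:1} the surface $\Gamma_M$ then lies in the hyperplane $\{x_d=0\}$, the domain $\Omega$ lies locally in $\{x_d<0\}$, and the singular point is $\bar x^\eps=\eps e_d$. Writing $x=(x',x_d)$ with $x'\in\RR^{d-1}$ and using $\nabla\Phi(y)=-c_d\,y/|y|^d$, with $c_d=\tfrac{1}{2\pi}$ for $d=2$ and $c_d=\tfrac{1}{4\pi}$ for $d=3$, one obtains
\[
  \lambda_{\bar x}^\eps(x)=e_d\cdot\nabla\Phi(x-\bar x^\eps)=-c_d\,\frac{x_d-\eps}{\big(|x'|^2+(x_d-\eps)^2\big)^{d/2}}.
\]

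Since $n=e_d$ on $\Gamma_M$, the co-normal derivative is simply $\partial_n\lambda_{\bar x}^\eps=\partial_{x_d}\lambda_{\bar x}^\eps$. Differentiating and evaluating at $x_d=0$, a direct calculation gives
\[
  \partial_n\lambda_{\bar x}^\eps(x)=c_d\,\big(|x'|^2+\eps^2\big)^{-(d+2)/2}\big[(d-1)\eps^2-|x'|^2\big],\qquad x\in\Gamma_M.
\]
On $\Gamma_M\cap B_\eps(\bar x)$ we have $|x-\bar x|=|x'|<\eps$, so that $(d-1)\eps^2-|x'|^2\ge(d-2)\eps^2\ge0$ and in fact is strictly positive; this immediately yields the pointwise nonnegativity $\partial_n\lambda_{\bar x}^\eps\ge0$ claimed in the second assertion.

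For the integral bound I would use the scaling substitution $x'=\eps y'$, under which the surface measure becomes $ds(x)=\eps^{d-1}\,dy'$ and the integration domain turns into the unit ball $\{|y'|<1\}\subset\RR^{d-1}$. Collecting the powers of $\eps$ from the kernel $(|x'|^2+\eps^2)^{-(d+2)/2}$, the bracket $(d-1)\eps^2-|x'|^2$, and the measure, all factors combine to a single $\eps^{-1}$, leaving
\[
  \int_{\Gamma_M\cap B_\eps(\bar x)}\partial_n\lambda_{\bar x}^\eps\,ds(x)=\frac{c_d}{\eps}\int_{|y'|<1}\frac{(d-1)-|y'|^2}{(1+|y'|^2)^{(d+2)/2}}\,dy'=:\frac{C_L''}{\eps},
\]
where the remaining integral is a strictly positive dimensional constant since its integrand is positive on $\{|y'|<1\}$. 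This proves the lower bound with $C_L''>0$ independent of $\eps$.

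There is no genuine obstacle beyond careful bookkeeping. The two points requiring attention are fixing the correct orientation (the domain on the side $x_d<0$ and the singularity at $x_d=\eps>0$, so that both the normal derivative and the surface measure are the flat ones) and verifying that the exponents of $\eps$ cancel to leave exactly $\eps^{-1}$. The flatness of $\Gamma_M$ from Assumption~\ref{ass:1} is precisely what makes $\partial_n=\partial_{x_d}$ and $ds$ the $(d-1)$-dimensional Lebesgue measure, thereby reducing the estimate to the elementary explicit integral above.
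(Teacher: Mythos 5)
Your proposal is correct and follows essentially the same route as the paper: both exploit the flatness of $\Gamma_M$ to compute $\partial_n \lambda_{\bar x}^\eps$ explicitly on the boundary, recognize that the numerator $(d-1)\eps^2 - |x-\bar x|^2$ is nonnegative on the disc of radius $\eps$, and deduce the $\eps^{-1}$ lower bound for the integral. The only cosmetic difference is that the paper bounds the integrand pointwise from below and multiplies by the disc area (carrying out $d=3$ in detail), whereas you evaluate the integral for general $d$ via the rescaling $x'=\eps y'$, obtaining the constant as an explicit positive dimensional integral.
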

\begin{proof}
We only consider the case $d=3$ in detail.
By Assumption~\ref{ass:1}, $n(x)=n(\bar x)$ on $\Gamma_M$. A brief inspection of Figure~\ref{fig:geometry} further reveals that $|x-\bar x^\eps|^2=|x-\bar x|^2 + \eps^2$ for $x \in \Gamma_M$ and $n(\bar x) \cdot (x-\bar x^\eps) = -\eps$. We therefore obtain 
\begin{align*}
4 \pi \partial_n \lambda_{\bar x}^\eps (x) 
&= 3 \frac{|n(\bar x) \cdot (x-\bar x^\eps)|^2}{|x-\bar x^\eps|^5} - \frac{|n(\bar x)|^2}{|x-\bar x^\eps|^3} 
= \frac{3 \eps^2 - |x-\bar x|^2 -\eps^2}{|x-\bar x^\eps|^5}. 
\end{align*}
This shows $\partial_n \lambda_{\bar x}^\eps \ge 2^{-5/2} \eps^{-3}$ for all $x \in \Gamma_M \cap B_{\eps}(\bar x)$ and thus positivity of the normal derivative. The lower bound on the integral follows by noting that $|\Gamma_M \cap B_\eps(\bar x)| = \pi \eps^2 /4$. 
The estimate for dimension $d=2$ can be derived in a similar way. 
\end{proof}
\section{Proof of the Theorem~\ref{thm:1}} \label{sec:proof}
We can now turn to the proof of our main result which proceeds by contradiction.
\subsection{Basic assumptions and integral identity}
Let the assumptions of Theorem~\ref{thm:1} hold but assume 
that there exists some $(\tilde t,\tilde g) \in (0,T) \times (\underline g,\overline g)$ 
such that $a_1(\tilde t, \tilde g) \ge a_2(\tilde t, \tilde g) + 2\eta$ with $\eta>0$.
By Assumption~\ref{ass:2}, we know that $a_1$ and $a_2$ are continuous and hence 
\begin{align} \label{eq:ass}
a_1(t,g) \ge a_2(t,g) + \eta, \qquad \text{for all } t \in (t_1,t_2) \text{ and } g \in (g_1,g_2)
\end{align}
for some appropriate intervals $(t_1,t_2)$ and $(g_1,g_2)$ around $\tilde t$ and $\tilde g$.
Let us denote by 
\begin{align} \label{eq:prim}
A_i(t,g) = \int_{g_1}^g a_i(t,u) du, \qquad i=1,2 
\end{align}
the anti-derivatives of the diffusion parameters $a_i(t,g)$. 
The following identity will be one of the central arguments for the proof of Theorem~\ref{thm:1}.
\begin{lemma} \label{lem:identity}
Let the assumptions of Theorem~\ref{thm:1} be valid. Then 
\begin{align*} 
&\int_0^T ( A_1(t,u_1) - A_2(t,u_2), \partial_n \varphi )_{\partial\Omega} dt
=  \int_0^T  \langle j_1-j_2,\varphi\rangle_{\partial \Omega} -\left(d_1(t,u_1) - d_2(t,u_2), \partial_t\varphi\right)_\Omega\\
& \qquad  -\left(b_1(x,t,u_1)-b_2(x,t,u_2),\nabla\varphi\right)_\Omega - \left( c_1(x,t,u_1,\nabla u_1) -c_2(x,t,u_2,\nabla u_2), \varphi \right)_\Omega dt\notag
\end{align*}
for all $\varphi\in H_0^1(0,T;H^1(\Omega))$ with $\Delta \varphi=0$ on $\Omega \times (0,T)$.
\end{lemma}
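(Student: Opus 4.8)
The plan is to start from the very definition of the co-normal derivatives $j_i$ in \eqref{eq:conormal} and to transform only the principal part $(a_i(t,u_i)\nabla u_i,\nabla\varphi)_\Omega$, leaving every lower order contribution untouched; the asserted identity then follows at once by subtracting the two resulting equations for $i=1,2$. The decisive observation, which is exactly where the special structure $a=a(t,u)$ enters, is that the antiderivative $A_i$ from \eqref{eq:prim} satisfies $\partial_g A_i(t,g)=a_i(t,g)$, so that by the chain rule the spatial gradient of the composition obeys $\nabla_x A_i(t,u_i)=a_i(t,u_i)\,\nabla u_i$ almost everywhere in $\Omega$; in other words, the principal flux is itself a gradient, and this is what will let us trade a volume integral for a boundary integral once the harmonicity of $\varphi$ is used.

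First I would make this chain rule rigorous at the Sobolev level. Since $a_i\in W^{1,\infty}$, the map $g\mapsto A_i(t,g)$ is continuously differentiable with Lipschitz derivative, and $u_i(t)\in H^1(\Omega)$, so the standard composition result for Lipschitz nonlinearities applied to Sobolev functions yields $A_i(t,u_i(t))\in H^1(\Omega)$ together with the gradient formula above; the explicit $t$-dependence of $A_i$ is harmless, as the identity is differentiated only in the spatial variable. Exploiting harmonicity, I would then integrate by parts: for fixed $t$, Green's first identity applied to $w:=A_i(t,u_i(t))\in H^1(\Omega)$ and the harmonic test function $\varphi(t)$ gives
\[
(a_i(t,u_i)\nabla u_i,\nabla\varphi)_\Omega
= (\nabla_x A_i(t,u_i),\nabla\varphi)_\Omega
= (A_i(t,u_i),\partial_n\varphi)_{\partial\Omega} - (A_i(t,u_i),\Delta\varphi)_\Omega
= (A_i(t,u_i),\partial_n\varphi)_{\partial\Omega},
\]
the volume term vanishing because $\Delta\varphi=0$ in $\Omega$.

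Integrating in time and inserting this into the defining relation \eqref{eq:conormal} yields, for each $i$,
\[
\int_0^T (A_i(t,u_i),\partial_n\varphi)_{\partial\Omega}\,dt
= \int_0^T \langle j_i,\varphi\rangle_{\partial\Omega}
- (b_i,\nabla\varphi)_\Omega - (c_i,\varphi)_\Omega - (d_i,\partial_t\varphi)_\Omega\,dt .
\]
Subtracting the equation for $i=2$ from the one for $i=1$ produces precisely the claimed identity. Measurability and integrability in $t$ of all the terms follow from the a priori bound \eqref{eq:apriori} together with Assumption~\ref{ass:2}.

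The only delicate points, and hence the main obstacle, are the two regularity steps: justifying the chain rule for $A_i(t,u_i)$ when $u_i(t)$ is merely in $H^1(\Omega)$, and justifying Green's identity when $w=A_i(t,u_i)$ has only $H^1$ regularity. The latter I would handle by approximating $w$ in $H^1(\Omega)$ by smooth functions and passing to the limit, using that the test function is smooth; indeed, in the application $\varphi=\lambda^\eps_{\bar x}\in C^\infty(\overline\Omega)$ by Lemma~\ref{lem:fundamental}, so that both the boundary trace of $w$ and the normal derivative $\partial_n\varphi$ are classically well defined and the pairing $(A_i(t,u_i),\partial_n\varphi)_{\partial\Omega}$ is meaningful. (For a general harmonic $\varphi\in H^1(\Omega)$ one would instead interpret $\partial_n\varphi$ as an element of $H^{-1/2}(\partial\Omega)$, but for the proof of Theorem~\ref{thm:1} it suffices to record the identity for the smooth functions $\lambda^\eps_{\bar x}$.)
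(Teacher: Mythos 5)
Your proposal is correct and follows essentially the same route as the paper: express the principal part via the primitives, $a_i(t,u_i)\nabla u_i=\nabla A_i(t,u_i)$, integrate by parts, and use $\Delta\varphi=0$ to reduce the volume term to a boundary term; that you integrate by parts for each $i$ before subtracting rather than after is immaterial. Your additional care about the Sobolev chain rule and the interpretation of $\partial_n\varphi$ (duality pairing in general, classical for the smooth $\lambda^\eps_{\bar x}$ used later) only fills in details the paper leaves implicit.
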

\begin{proof}
Subtracting the two equations \eqref{eq:conormal} which define the fluxes $j_i$, we obtain  
\begin{align*}
\int_0^T \langle j_1-j_2,\varphi\rangle_{\partial \Omega}dt 
&= \int_0^T \left( a_1(t,u_1)\nabla u_1 - a_2(t,u_2)\nabla u_2, \nabla \varphi \right)_\Omega 
  + \left(d(t,u_1) - d(t,u_2), \dt \varphi\right)_\Omega \\
& \qquad + \left( b_1(x,t,u_1) - b_2(x,t,u_2), \nabla \varphi \right)_\Omega \\
& \qquad + \left( c_1(x,t,u_1,\nabla u_1) - c_2(x,t,u_2,\nabla u_2), \varphi \right)_\Omega  dt
\end{align*}
for all $\varphi\in H_0^1(0,T;H^1(\Omega))$. We can now express the terms $a_i(t,u_i) \nabla u_i = \nabla A_i(t,u_i)$ via the primitives, and then use integration-by-parts for the first term on the right hand side as well as $\Delta\varphi=0$ to obtain the result.
\end{proof}

\subsection{Construction of Dirichlet data}
We now define spatial and temporal cutoff functions that will be used for localization of the following estimates in space and time, i.e.,
\begin{align} \label{eq:psieps}
\chi_{\bar x}^\eps(x) &=  \begin{cases} 1, & |x-\bar x| \le \eps/2, \\ 2-2 |x-\bar x|/\eps, & \eps/2 < |x-\bar x| < \eps,\\ 0, & \text{else}, \end{cases}
\intertext{and}
\chi(t) &= \max\{(t-t_1)(t_2-t),0\}. 
\end{align}
With the help of these auxiliary functions, we can now construct candidates for appropriate Dirichlet data $g$ to be used for the proof of Theorem~\ref{thm:1}. 
This construction also yields a constant $C_G$ to be used in Assumption~\ref{ass:3}.
\begin{lemma} \label{lem:dirichlet}
There exist positive constants $\gamma$ and $C_G$ such that for any $0<\eps \le \eps_0$
the function $g_{\bar x}^\eps(x,t) = g_1 + \gamma \eps^{(3-d)/2}\chi_{\bar x}^\eps(x) \chi(t)$ 
satisfies 
\begin{align*} 
\underline g \leq g_1\le g_{\bar x}^\eps \le g_2\leq \bar g
\qquad \text{and} \qquad 
\|g_{\bar x}^\eps\|_{H^1(0,T;H^1(\partial\Omega))} \le C_G.
\end{align*}
\end{lemma}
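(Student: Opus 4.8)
The plan is to treat the two claims separately and to exploit the fact that the prefactor $\eps^{(3-d)/2}$ is tuned precisely to cancel the blow-up of the surface $H^1$-norm of the spatial cutoff as $\eps\to0$. For the pointwise bounds I would first fix the intervals: the intervals $(t_1,t_2)$ and $(g_1,g_2)$ in \eqref{eq:ass} surround $(\tilde t,\tilde g)\in(0,T)\times(\underline g,\overline g)$ and may be shrunk so that $\underline g\le g_1<g_2\le\overline g$, which gives the outer inequalities. Since $0\le\chi_{\bar x}^\eps\le1$ by \eqref{eq:psieps} and $\chi\ge0$, the bound $g_1\le g_{\bar x}^\eps$ is immediate. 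For the upper bound I would note that $\chi(t)\le(t_2-t_1)^2/4$ and that $\eps^{(3-d)/2}$ is bounded uniformly on $(0,\eps_0]$ for $d\in\{2,3\}$ (it equals $1$ for $d=3$ and is $\le\eps_0^{1/2}$ for $d=2$), so I may choose $\gamma>0$ so small that $\gamma\,\eps^{(3-d)/2}(t_2-t_1)^2/4\le g_2-g_1$ for all $0<\eps\le\eps_0$, forcing $g_{\bar x}^\eps\le g_2$.

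For the norm estimate I would exploit the tensor-product structure $g_{\bar x}^\eps-g_1=\gamma\,\eps^{(3-d)/2}\chi_{\bar x}^\eps(x)\,\chi(t)$. The constant $g_1$ has vanishing space- and time-derivatives and contributes at most $\overline g\,(T|\partial\Omega|)^{1/2}$ to the norm, so it suffices to bound the product. Separation of variables gives $\|fh\|_{H^1(0,T;H^1(\partial\Omega))}=\|f\|_{H^1(\partial\Omega)}\|h\|_{H^1(0,T)}$ for $f=f(x)$ and $h=h(t)$, hence $\|g_{\bar x}^\eps-g_1\|_{H^1(0,T;H^1(\partial\Omega))}=\gamma\,\eps^{(3-d)/2}\|\chi_{\bar x}^\eps\|_{H^1(\partial\Omega)}\|\chi\|_{H^1(0,T)}$. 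The temporal factor is bounded by a constant depending only on $t_1,t_2$, since $\chi$ and $\chi'$ are bounded on $(t_1,t_2)$ and vanish elsewhere. For the spatial factor I use Assumption~\ref{ass:1}: for $\eps\le\eps_0$ the support $\{|x-\bar x|\le\eps\}\cap\partial\Omega$ lies in the flat piece $\Gamma_M$, so it is a $(d-1)$-disk of area of order $\eps^{d-1}$, and the surface gradient of $\chi_{\bar x}^\eps$ agrees with the planar gradient of its radial profile, of magnitude $2/\eps$ on the annulus $\eps/2<|x-\bar x|<\eps$. This yields $\|\chi_{\bar x}^\eps\|_{L^2(\partial\Omega)}^2\le C\eps^{d-1}$ and $\|\nabla\chi_{\bar x}^\eps\|_{L^2(\partial\Omega)}^2\le C\eps^{-2}\eps^{d-1}=C\eps^{d-3}$, so that $\|\chi_{\bar x}^\eps\|_{H^1(\partial\Omega)}\le C\eps^{(d-3)/2}$ for $\eps\le\eps_0$. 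Multiplying by the prefactor, the $\eps$-powers cancel, $\gamma\,\eps^{(3-d)/2}\cdot C\eps^{(d-3)/2}=\gamma C$, and the norm of the variable part is bounded independently of $\eps$; setting $C_G:=\overline g\,(T|\partial\Omega|)^{1/2}+\gamma C\,\|\chi\|_{H^1(0,T)}$ then completes the argument.

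I expect the only genuinely delicate step to be the computation of the surface $H^1$-norm of $\chi_{\bar x}^\eps$: one must invoke the flatness of $\Gamma_M$ to guarantee that the whole support stays on the flat piece and that the tangential gradient equals the planar one, and then track the exact powers of $\eps$. Everything else is elementary, and the exponent $(3-d)/2$ is exactly the value that makes the spatial blow-up $\eps^{(d-3)/2}$ and the prefactor cancel, which is the whole design principle behind this choice of Dirichlet data.
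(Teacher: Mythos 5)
Your argument follows essentially the same route as the paper: the same cutoff structure, the same power counting $\|\nabla\chi_{\bar x}^\eps\|_{L^2(\Gamma_M\cap B_\eps(\bar x))}^2\le C\eps^{d-3}$, and the same cancellation of the prefactor $\eps^{(3-d)/2}$ against $\eps^{(d-3)/2}$; your tensor-product identity $\|fh\|_{H^1(0,T;H^1(\partial\Omega))}=\|f\|_{H^1(\partial\Omega)}\|h\|_{H^1(0,T)}$ is simply a clean way of organizing what the paper does by estimating $\|\nabla g_{\bar x}^\eps(\cdot,t)\|_{L^2(\partial\Omega)}$ and its time derivative separately. As a proof of the literal statement, with $t_1,t_2,g_1,g_2$ regarded as fixed, it is correct (modulo the cosmetic point that the contribution of the constant $g_1$ should be bounded by $\max(|\underline g|,|\overline g|)(T|\partial\Omega|)^{1/2}$ in case $g_1<0$).

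There is, however, one substantive point you miss, and it concerns the role of $C_G$ rather than its mere existence. The sentence preceding the lemma (``This construction also yields a constant $C_G$ to be used in Assumption~\ref{ass:3}'') means that $C_G$ must define the data class $G$ \emph{a priori}: it may depend on $\underline g,\overline g,T,\eps_0,\Omega$, but not on the intervals $(t_1,t_2)$ and $(g_1,g_2)$, since these are produced by the contradiction hypothesis and therefore depend on the unknown coefficients $a_1,a_2$. Your calibration of $\gamma$ through the sharp bound $\chi\le (t_2-t_1)^2/4$, i.e.\ $\gamma\sim(g_2-g_1)(t_2-t_1)^{-2}$, destroys exactly this uniformity: since $\|\chi\|_{H^1(0,T)}\sim(t_2-t_1)^{3/2}$ for short intervals, your $C_G$ contains a term of order $(g_2-g_1)(t_2-t_1)^{-1/2}$, which blows up as the time interval on which $a_1-a_2\ge\eta$ shrinks. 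Consequently no fixed class $G$ would contain your Dirichlet data for all admissible coefficient pairs, and the proof of Theorem~\ref{thm:1}, in which $G$ is fixed before $a_1,a_2$ are examined, would not go through. The paper avoids this by using the crude bound $\chi(t)\le T^2/4$ and choosing $\gamma=4(g_2-g_1)\min(1,\eps_0^{(d-3)/2})/T^2$, which is dominated by the a priori quantity $\gamma_0=4(\overline g-\underline g)\min(1,\eps_0^{(d-3)/2})/T^2$, so that $C_G=2\gamma_0C_1+C_2$ depends only on $\underline g,\overline g,T,\eps_0$ and the geometry. The repair in your argument is one line: calibrate $\gamma$ with $T$ instead of $t_2-t_1$ (any smaller $\gamma$ still yields the pointwise bounds), after which $\gamma\|\chi\|_{H^1(0,T)}$ is bounded by a constant depending only on $\overline g-\underline g$, $T$, and $\eps_0$.
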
 
\begin{proof}
Since $\chi$ is piecewise smooth, $0\leq \chi(t)\leq T^2/4$ and $|\chi'(t)|\leq T$, the functions $g^\eps$ are uniformly bounded and also differentiable with respect to $t$ and by choosing $\gamma=4(g_2-g_1)\min(1,\eps_0^{(d-3)/2})/T^2$,
we can satisfy the asserted pointwise bounds. 
We therefore only have to consider the spatial derivatives in detail. By direct computation
\begin{align*}
\|\nabla g_{\bar x}^\eps(\cdot,t)\|_{L^2(\partial\Omega)} \le  \frac{\gamma T^2}{4} \eps^{(3-d)/2}\|\nabla \chi_{\bar x}^\eps\|_{L^2(\Gamma_M \cap B_{\eps}(\bar x))}.
\end{align*}
 Furthermore, by definition of $\chi_{\bar x}^\eps$,
\begin{align*}
\|\nabla \chi_{\bar x}^\eps\|_{L^2(\Gamma_M \cap B_{\eps}(\bar x))}^2 
&\le C' \int_{\eps/2}^{\eps} \eps^{-2} r^{d-2} dr = C'' \eps^{d-3}. 
\end{align*}
This shows that $\|\nabla g^\eps_{\bar x}(\cdot,t)\|_{L^2(\partial\Omega)} \le \gamma C_1$, and 
the same estimate can be obtained for the time derivative $\|\dt \nabla g^\eps_{\bar x}(\cdot,t)\|_{L^2(\partial\Omega)}$.
Thus we obtain 
\begin{align*}
\|g_{\bar x}^\eps\|_{H^1(0,T;H^1(\partial\Omega))} \le 2 \gamma C_1 + C_2, 
\end{align*}
and we may choose $C_G=2 \gamma_0 C_1 + C_2$ with $\gamma_0=4(\bar g-\underline g) \min(1,\eps_0^{(d-3)/2})/T^2\geq \gamma$ as uniform bound for the norm.
\end{proof}

By Assumption~\ref{ass:3} we therefore know, that for any $g=g_{\bar x}^\eps$ as defined above, 
we have weak solutions $u_i(g)$ and corresponding fluxes $j_i(g)$ as required in Theorem~\ref{thm:1}. 

\subsection{Test function and estimates for the flux}
We test the identity of Lemma~\ref{lem:identity} with appropriate test functions $\varphi$ to prove the theorem. 
\begin{lemma} \label{lem:estimates}
Set $\varphi_{\bar x}^\eps(x,t) =  \lambda_{\bar x}^\eps(x)\chi(t)$ and let $u_i^\eps=u_i(g_{\bar x}^\eps)$ be weak solutions of \eqref{eq:par}--\eqref{eq:ic} in the sense of Assumption~\ref{ass:3} for Dirichlet data $g=g_{\bar x}^\eps$ and with parameter functions $a=a_i$, $b=b_i$, $c=c_i$, $d=d_i$, and initial values $u_{i,0}$ satisfying Assumption~\ref{ass:2}. Then 
\begin{align*}
\int_0^T ( A_1(t,u_1^\eps) - A_2(t,u_2^\eps), \partial_n \varphi_{\bar x}^\eps)_{\partial\Omega}\d t \ge C_1 \eps^{(1-d)/2}. 
\end{align*}
If, in addition, $j_1 \equiv j_2$ on $\Gamma_M\times (0,T)$, then
\begin{align*}
\Big|\int_0^T  \langle j_1-j_2,\varphi_{\bar x}^\eps\rangle_{\partial \Omega} -\left(d_1(t,u_1) - d_2(t,u_2), \partial_t\varphi_{\bar x}^\eps\right)_\Omega -\left(b_1(x,t,u_1)-b_2(x,t,u_2),\nabla\varphi_{\bar x}^\eps\right)_\Omega \\
- \left( c_1(x,t,u_1,\nabla u_1) - c_2(x,t,u_2,\nabla u_2), \varphi_{\bar x}^\eps \right)_\Omega dt \Big|\notag
 \le C_2  |\ln(\eps)|
\end{align*}
for all $0 < \eps \le \eps_0$ with some positive constants $C_1$, $C_2$ independent of $\eps$. 
\end{lemma}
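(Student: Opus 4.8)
The plan is to apply Lemma~\ref{lem:identity} with the spatially harmonic test function $\varphi_{\bar x}^\eps=\lambda_{\bar x}^\eps\chi$, and then to bound the boundary term on the left from below and the four terms on the right from above, tracking their different powers of $\eps$.

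For the lower bound I would exploit that $u_1^\eps$ and $u_2^\eps$ carry the \emph{same} Dirichlet trace $g_{\bar x}^\eps$, so that on $\partial\Omega$ the integrand equals $A_1(t,g_{\bar x}^\eps)-A_2(t,g_{\bar x}^\eps)=\int_{g_1}^{g_{\bar x}^\eps}\bigl(a_1(t,u)-a_2(t,u)\bigr)\,du$. This vanishes wherever $g_{\bar x}^\eps=g_1$, so its support is contained in $(\Gamma_M\cap B_\eps(\bar x))\times(t_1,t_2)$, and there \eqref{eq:ass} gives $A_1-A_2\ge\eta\,(g_{\bar x}^\eps-g_1)=\eta\gamma\eps^{(3-d)/2}\chi_{\bar x}^\eps\chi(t)\ge0$. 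Since $\partial_n\varphi_{\bar x}^\eps=\chi(t)\,\partial_n\lambda_{\bar x}^\eps\ge0$ on $\Gamma_M\cap B_\eps(\bar x)$ by Lemma~\ref{lem:estimate_dn_lambda}, the whole boundary integrand is non-negative and I may restrict to the smaller cap $\Gamma_M\cap B_{\eps/2}(\bar x)$, on which $\chi_{\bar x}^\eps\equiv1$. The pointwise estimate for $\partial_n\lambda_{\bar x}^\eps$ from the proof of Lemma~\ref{lem:estimate_dn_lambda} still yields $\int_{\Gamma_M\cap B_{\eps/2}(\bar x)}\partial_n\lambda_{\bar x}^\eps\,ds\ge C\eps^{-1}$, and multiplying by $\eta\gamma\eps^{(3-d)/2}$ and the positive $\eps$-independent factor $\int_0^T\chi^2\,dt$ produces the claimed bound $C_1\eps^{(3-d)/2-1}=C_1\eps^{(1-d)/2}$.

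For the remaining lower order terms I would use that, by Assumption~\ref{ass:2}, the compositions $b_i(x,t,u_i)$, $c_i(x,t,u_i,\nabla u_i)$ and $d_i(t,u_i)$ are bounded in $L^\infty(\Omega\times(0,T))$ by $C_A$, and pair each of them, via H\"older's inequality, with $\lambda_{\bar x}^\eps$ or $\nabla\lambda_{\bar x}^\eps$ in $L^1(\Omega)$. By Lemma~\ref{lem:fundamental} with $p=1$ the $d$-term (carrying $\partial_t\varphi_{\bar x}^\eps=\lambda_{\bar x}^\eps\chi'$) and the $c$-term (carrying $\varphi_{\bar x}^\eps=\lambda_{\bar x}^\eps\chi$) are bounded by $C\|\lambda_{\bar x}^\eps\|_{L^1(\Omega)}\le C$, whereas the $b$-term (carrying $\nabla\varphi_{\bar x}^\eps=\chi\,\nabla\lambda_{\bar x}^\eps$) is bounded by $C\|\nabla\lambda_{\bar x}^\eps\|_{L^1(\Omega)}\le C|\ln\eps|$; this is the single source of the logarithm.

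The main obstacle is the flux term $\int_0^T\langle j_1-j_2,\varphi_{\bar x}^\eps\rangle_{\partial\Omega}\,dt$, because $\varphi_{\bar x}^\eps$ does not vanish on $\partial\Omega\setminus\Gamma_M$ whereas the hypothesis $j_1\equiv j_2$ holds only on $\Gamma_M$. I would fix a smooth cutoff $\theta$ with $\theta\equiv1$ on $B_{\eps_0/2}(\bar x)$ and $\operatorname{supp}\theta\subset B_{\eps_0}(\bar x)$, and split $\varphi_{\bar x}^\eps=\theta\varphi_{\bar x}^\eps+(1-\theta)\varphi_{\bar x}^\eps$ using the linearity of $j_1-j_2$. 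The first piece lies in $H_0^1(0,T;H^1(\Omega))$ with boundary trace supported in $B_{\eps_0}(\bar x)\cap\partial\Omega=\Gamma_M$, so its pairing vanishes by the definition of $j_1\equiv j_2$ on $\Gamma_M\times(0,T)$. The second piece is supported in $\{|x-\bar x|\ge\eps_0/2\}$, where Lemma~\ref{lem:bound_on_lambda} bounds $\lambda_{\bar x}^\eps$ and $\nabla\lambda_{\bar x}^\eps$ uniformly in $\eps$; hence $\|(1-\theta)\varphi_{\bar x}^\eps\|_{H^1(0,T;H^1(\Omega))}\le C$, and \eqref{eq:flux_estimate} applied to $j_1-j_2$ bounds its pairing by a constant. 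Adding all contributions, everything is dominated by the $b$-term, which gives the asserted bound $C_2|\ln\eps|$.
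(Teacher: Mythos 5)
Your proof is correct and follows essentially the same route as the paper's: reduce the boundary term to the cap $\Gamma_M\cap B_\eps(\bar x)$ using that both solutions share the trace $g_{\bar x}^\eps$, bound it below via \eqref{eq:ass} and the positivity/pointwise estimates behind Lemma~\ref{lem:estimate_dn_lambda}, handle the flux pairing by a cutoff splitting (your smooth $\theta$ plays exactly the role of the paper's $\chi_{\bar x}^{\eps_0}$) together with Lemma~\ref{lem:bound_on_lambda} and \eqref{eq:flux_estimate}, and control the lower order terms by $L^\infty$ bounds paired with the $L^1$ estimates of Lemma~\ref{lem:fundamental}. The only difference is that you spell out the ``elementary computations'' (restriction to $B_{\eps/2}(\bar x)$ where $\chi_{\bar x}^\eps\equiv 1$) that the paper leaves implicit.
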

\begin{proof}
Using the particular construction of the anti-derivatives $A_i$, of the Dirichlet datum $g_{\bar x}^\eps$, and of the test function $\varphi_{\bar x}^\eps$, we directly  obtain
\begin{align*}
( A_1(t,u_1^\eps) - A_2(t,u_2^\eps), \partial_n \varphi_{\bar x}^\eps(t))_{\partial\Omega}
=  \big(A_1(t,g_{\bar x}^\eps) - A_2(t,g_{\bar x}^\eps),  \chi(t)\partial_n \lambda^\eps_{\bar x}  \big)_{\Gamma_M\cap B_{\eps}(\bar x)}.
\end{align*}
Since $g_1\leq g_{\bar x}^\eps\leq g_2$ on $B_{\eps}(\bar x)\cap \Gamma_M\times (t_1,t_2)$ by Lemma~\ref{lem:dirichlet} and $a_1-a_2 \ge \eta$ on $(t_1,t_2)\times(g_1,g_2)$ by assumption \eqref{eq:ass}, we can use Lemma~\ref{lem:estimate_dn_lambda} and some elementary computations 
to see that
\begin{align*}
  \int_{t_1}^{t_2} \big( A_1(t,g^\eps_{\bar x}) - A_2(t,g^\eps_{\bar x}), \chi(t) \partial_n \lambda^\eps_{\bar x} \big)_{\Gamma_M\cap B_\eps(\bar x)} \d t
 \geq C_1\eps^{\frac{1-d}{2}}.
\end{align*}
Since $j_1 \equiv j_2$ on $\Gamma_M\times (t_1,t_2)$ and $\chi^{\eps_0}_{\bar x} \equiv 0$ on $\partial\Omega \setminus \Gamma_M$, we further see that 
\begin{align*}
	\int_0^T \langle j_1-j_2,\chi(t) \lambda_{\bar x}^\eps \rangle_{\partial\Omega} dt &= \int_0^T \langle j_1-j_2,\chi(t)(1-\chi^{\eps_0}_{\bar x})\lambda_{\bar x}^\eps \rangle_{\partial\Omega} dt\\ 
	&\leq  C_A (3+C_U) \| (1-\chi^{\eps_0}_{\bar x}) \lambda^\eps_{\bar x} \chi\|_{H^1(0,T;H^1(\Omega))} \leq C,
\end{align*}
where we used the estimate \eqref{eq:flux_estimate} for the flux and the uniform bounds for the cutoff functions and $\lambda_{\bar x}^\eps$ provided by Lemma~\ref{lem:bound_on_lambda}.
The constant $C$ therefore can be chosen independent of $\eps$. 
Using Lemma~\ref{lem:fundamental}, we can further estimate 
\begin{align*}
	\int_0^T\left(b_1(x,t,u_1)-b_2(x,t,u_2),\nabla\varphi_{\bar x}^\eps\right)_\Omega dt \leq C |\ln(\eps)|
\end{align*}
and the other terms of lower order can be estimated similarly due to the bounds of Assumption~\ref{ass:3} and the uniform estimates for the singular function $\lambda_{\bar x}^\eps$.
\end{proof}

\subsection{Proof of Theorem~\ref{thm:1}}
For $\eps$ sufficiently small, the estimates of Lemma~\ref{lem:estimates} are in contradiction to the identity of Lemma~\ref{lem:identity}. 
Thus the assumption \eqref{eq:ass} cannot be valid. This concludes the proof of Theorem~\ref{thm:1}. \qed


\section{Further results and extensions} \label{sec:additional}

We now present some further results, that can be deduced or derived in a similar way as Theorem~1,
and then discuss some possibilities for relaxing the assumptions.

\subsection{The reverse implication} 

As mentioned in the introduction, the reverse result
\begin{align}
a_1 \equiv a_2 \text{ on } (0,T) \times (\underline g,\overline g) 
\implies 
j_1(g) \equiv j_2(g) \text{ on } \Gamma_M\times (0,T) \text{ for all } g \in G
\end{align}
does in general not hold, unless more information about the lower order terms and the solution 
is available. A mismatch of the Neumann data therefore does not allow to deduce a difference in the parameter. 
For the following assertion, we additionally require 
\begin{assumption} \label{ass:4}
For any $g \in G$ and initial value $u_0$ satisfying Assumption~\ref{ass:2}, the weak solution $u(g)$ of \eqref{eq:par}--\eqref{eq:ic} specified in Assumption~\ref{ass:3} is unique and satisfies $\underline g \le u(g) \le \overline g$.
\end{assumption}
In many applications, Assumption~\ref{ass:4} can be verified by comparison principles or similar considerations. From the results of the previous sections and the above considerations, 
we can now directly deduce validity of the following assertion. 
\begin{theorem} \label{thm:2}
Let the assumptions of Theorem~\ref{thm:1} hold. Furthermore, let Assumption~\ref{ass:4} be valid and assume that $b_1=b_2$, $c_1=c_2$, $d_1=d_2$, and $u_{0,1}=u_{0,2}$. Then 
\begin{align*}
a_1 \equiv  a_2 \text{ on } (0,T) \times (\underline g,\overline g) 
\Longleftrightarrow 
j_1(g) \equiv j_2(g)  \text{ on } \Gamma_M\times (0,T) \text{ for all } g \in G. 
\end{align*}
\end{theorem}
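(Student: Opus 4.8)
The plan is to establish the two implications of the equivalence separately, exploiting that one of them is already contained in Theorem~\ref{thm:1}. The implication ``$\Longleftarrow$'' is exactly the contrapositive of Theorem~\ref{thm:1}: if $j_1(g) \equiv j_2(g)$ on $\Gamma_M \times (0,T)$ for every $g \in G$, then there can be no pair $(\tilde t, \tilde g) \in (0,T) \times [\underline g, \overline g]$ with $a_1(\tilde t,\tilde g) \neq a_2(\tilde t,\tilde g)$, and hence $a_1 \equiv a_2$ on $(0,T) \times (\underline g,\overline g)$. It is worth noting that this direction uses neither Assumption~\ref{ass:4} nor the coincidence of the lower order terms; it follows from Theorem~\ref{thm:1} alone.

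For the implication ``$\Longrightarrow$'', the guiding idea is that under the additional hypotheses the two forward problems become literally identical, so that their solutions and fluxes must coincide. First I would invoke Assumption~\ref{ass:4} to guarantee $\underline g \le u_i(g) \le \overline g$ for $i=1,2$; this confines the solution trajectories to the range on which $a_1 \equiv a_2$, so that $a_1(t,u_i(g)) = a_2(t,u_i(g))$. Combined with the assumed equalities $b_1=b_2$, $c_1=c_2$, $d_1=d_2$, the weak formulation \eqref{eq:weak} is then the same for both indices. Since in addition the Dirichlet datum $g$ and the initial value $u_{0,1}=u_{0,2}$ agree, the uniqueness part of Assumption~\ref{ass:4} forces $u_1(g) = u_2(g) =: u$.

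It then remains only to read off equality of the fluxes. Substituting $u_1(g)=u_2(g)=u$ into the defining relation \eqref{eq:conormal} and using $a_1(t,u)=a_2(t,u)$ together with $b_1=b_2$, $c_1=c_2$, $d_1=d_2$, every term in the expression for $\int_0^T \langle j_i(g),\varphi\rangle_{\partial\Omega}\,\d t$ coincides for $i=1,2$. Hence $\int_0^T \langle j_1(g)-j_2(g),\varphi\rangle_{\partial\Omega}\,\d t = 0$ for all admissible $\varphi \in H_0^1(0,T;H^1(\Omega))$ and all $g \in G$, which gives $j_1(g) \equiv j_2(g)$ on $\Gamma_M \times (0,T)$. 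I expect the only genuine subtlety, and the reason Assumption~\ref{ass:4} is indispensable here, to be the range confinement $\underline g \le u_i \le \overline g$: without it the solutions could attain values where $a_1$ and $a_2$ are a priori unrelated, and equality of the coefficients along the trajectory would fail. The remaining steps are direct substitutions and require no estimates.
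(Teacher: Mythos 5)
Your proof is correct and is essentially the paper's own argument: the paper deduces Theorem~\ref{thm:2} directly from Theorem~\ref{thm:1} (whose contrapositive gives the implication from flux equality to coefficient equality, without needing Assumption~\ref{ass:4}) together with Assumption~\ref{ass:4} and the coincidence of the lower order terms and data (which, via the range confinement $\underline g \le u_i(g) \le \overline g$ and uniqueness, yield $u_1(g)=u_2(g)$ and hence equal fluxes from \eqref{eq:conormal}). The only detail worth adding is that $a_1 \equiv a_2$ is assumed only on the open rectangle $(0,T)\times(\underline g,\overline g)$ while the solutions take values in the closed interval $[\underline g,\overline g]$; the $W^{1,\infty}$ regularity of Assumption~\ref{ass:2} (hence continuity of $a_i$) extends the equality to the closure, so that $a_1(t,u_i)=a_2(t,u_i)$ indeed holds along the trajectories.
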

Note that the assertion of Theorem~\ref{thm:1} is inline with similar results reported in literature; see, e.g., \cite{Isakov93,Isakov01}. Also there, the validity of the reverse implication provided by Theorem~\ref{thm:2} would require further assumptions. For linear elliptic problems without lower order terms, like the Calder\'on problem, the reverse implication is however trivially satisfied.

\subsection{Elliptic problems}

A brief inspection of the proof of Theorem~\ref{thm:1} shows that similar arguments can be used to derive a corresponding result for the elliptic problem
\begin{align}\label{eq:ell}
 - \div ( a(u) \nabla u + b(x,u)) + c(x,u,\nabla u) &=0 \qquad \text{in } \Omega,\\
  u &= g \qquad\text{on }\partial\Omega.\label{eq:ell_bc1}
\end{align}
The assumptions on the coefficients can now be replaced by 
\begin{assumption} \label{ass:2e} 
The coefficients $a$, $b$, and $c$ lie in $W^{1,\infty}$ with norm bounded by a uniform constant $C_A$ and, in addition, $0 < \underline a \le a \le \overline a$ for some $\underline a,\overline a>0$.
\end{assumption}
Again, we first only have to require existence of weak solutions together with some uniform bounds.
The corresponding assumption for the elliptic case reads
\begin{assumption} \label{ass:3e}
For any $g \in G = \{g \in  H^1(\partial\Omega): \underline g \le g \le \overline g \text{ and } \|g\|_{H^1(\partial\Omega)} \le C_G\}$ there exists a function $u \in H^1(\Omega)$ which satisfies  \eqref{eq:ell_bc1} in the sense of traces and \eqref{eq:ell} in the sense of distributions, i.e., 
\begin{align*}
(a(u) \nabla u + b(x,u), \nabla \phi)_\Omega + (c(x,u,\nabla u),\phi)_\Omega = 0
\end{align*}
for all test function $\phi \in H^1_0(\Omega)$. Moreover, any such weak solution is bounded by
\begin{align*}
\|u\|_{H^1(\Omega)} \le C_U
\end{align*}
with a constant $C_U=C_U(C_A,C_G,\eps_0,\Omega,d,\underline g,\bar g)$ independent of the particular choice of the parameters and the boundary data.
\end{assumption}
The Neumann flux can now be defined as a linear functional on $H^1(\Omega)$ by 
\begin{align}\label{eq:flux_stat}
 \langle j,\varphi\rangle_{\partial\Omega} = \int_\Omega a(u)\nabla u\cdot \nabla \varphi + b(x,u) \cdot\nabla\varphi +c(x,u,\nabla u)\varphi \d x
\end{align}
for all $\phi \in H^1(\Omega)$. 
With similar reasoning as in the parabolic case, we then obtain
\begin{theorem} \label{thm:3}
Let Assumption~\ref{ass:1} hold and $a_i$, $b_i$, $c_i$, $i=1,2$ satisfy Assumption~\ref{ass:2e}.
For any $g \in G$ let $u_i(g)$ denote 
weak solutions in the sense of Assumption~\ref{ass:3e} and let $j_i(g)$ be the corresponding Neumann fluxes. Then 
\begin{align}
a_1 \not\equiv a_2 \text{ on } (\underline g,\overline g) 
\implies j_1(g) \not\equiv j_2(g) \text{ on } \Gamma_M \text{ for some } g \in G.
\end{align}
If, in addition, the weak solution $u(g)$ is unique for any $g \in G$ and 
satisfies $\underline g \le u(g) \le \overline u(g)$, 
and if $b_1 \equiv b_2$ and $c_1 \equiv c_2$, then the reverse implication holds as well.
\end{theorem}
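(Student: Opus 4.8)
The plan is to adapt the proof of Theorem~\ref{thm:1} to the stationary setting, where the time variable and the temporal cutoff $\chi(t)$ are simply dropped. For the forward (uniqueness) implication I would argue by contradiction: assuming $a_1(\tilde g)\ge a_2(\tilde g)+2\eta$ for some $\tilde g\in(\underline g,\overline g)$, continuity of $a_1,a_2$ (Assumption~\ref{ass:2e}) yields an interval $(g_1,g_2)\ni\tilde g$ on which $a_1-a_2\ge\eta$. Introducing the primitives $A_i(g)=\int_{g_1}^g a_i(u)\,du$ and subtracting the two flux identities \eqref{eq:flux_stat}, I would write $a_i(u_i)\nabla u_i=\nabla A_i(u_i)$ and integrate by parts against a harmonic test function $\varphi$. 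Since $\Delta\varphi=0$, the principal part collapses to a boundary integral, giving the elliptic analogue of Lemma~\ref{lem:identity}:
\begin{align*}
(A_1(u_1)-A_2(u_2),\partial_n\varphi)_{\partial\Omega}
=\langle j_1-j_2,\varphi\rangle_{\partial\Omega}
-(b_1(x,u_1)-b_2(x,u_2),\nabla\varphi)_\Omega
-(c_1-c_2,\varphi)_\Omega,
\end{align*}
valid for every $\varphi\in H^1(\Omega)$ with $\Delta\varphi=0$.

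Next I would localize exactly as in Section~\ref{sec:proof}, but without the time factor: choose the Dirichlet datum $g_{\bar x}^\eps(x)=g_1+\gamma\eps^{(3-d)/2}\chi_{\bar x}^\eps(x)$ and the harmonic test function $\varphi_{\bar x}^\eps=\lambda_{\bar x}^\eps$, which is admissible by Lemma~\ref{lem:fundamental}. The constant $\gamma$ is fixed so that $g_1\le g_{\bar x}^\eps\le g_2$, whence the trace identity $u_i^\eps=g_{\bar x}^\eps$ on $\partial\Omega$ (Assumption~\ref{ass:3e}) gives $A_1(g_{\bar x}^\eps)-A_2(g_{\bar x}^\eps)\ge\eta(g_{\bar x}^\eps-g_1)=\eta\gamma\eps^{(3-d)/2}$ on $B_{\eps/2}(\bar x)\cap\Gamma_M$. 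Combining this with the positivity and the integral lower bound $\int_{\Gamma_M\cap B_\eps(\bar x)}\partial_n\lambda_{\bar x}^\eps\,ds\ge C_L''/\eps$ from Lemma~\ref{lem:estimate_dn_lambda}, the left-hand side is bounded below by $C_1\eps^{(1-d)/2}$. For the right-hand side, the flux term vanishes once I insert the boundary cutoff $\chi_{\bar x}^{\eps_0}$ (supported in $\Gamma_M$, where $j_1\equiv j_2$) and bound the remaining part through the flux estimate together with the uniform bounds of Lemma~\ref{lem:bound_on_lambda}; the $b$-term is controlled by $\|b_1-b_2\|_{L^\infty}\|\nabla\lambda_{\bar x}^\eps\|_{L^1}\le C|\ln\eps|$ using Lemma~\ref{lem:fundamental}, and the $c$-term by Cauchy--Schwarz with $\|\lambda_{\bar x}^\eps\|_{L^2}$, whose growth is strictly slower than $\eps^{(1-d)/2}$ in both dimensions $d=2,3$. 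Letting $\eps\to0$ produces the desired contradiction, so $a_1\equiv a_2$.

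For the reverse implication the argument is immediate: if $b_1\equiv b_2$, $c_1\equiv c_2$ and $a_1\equiv a_2$ on $(\underline g,\overline g)$, then, since the solution satisfies $\underline g\le u(g)\le\overline g$, the composed coefficients $a_2(u_1)=a_1(u_1)$, $b_2(x,u_1)=b_1(x,u_1)$ and $c_2(x,u_1,\nabla u_1)=c_1(x,u_1,\nabla u_1)$ agree along $u_1(g)$. Hence $u_1(g)$ also solves the weak formulation associated with the second coefficient set, and by the assumed uniqueness $u_1(g)=u_2(g)$. Inserting this common solution into \eqref{eq:flux_stat} shows $j_1(g)=j_2(g)$ for every $g\in G$.

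The main obstacle I anticipate is the balancing in the lower bound on the principal part: the scaling $\gamma\eps^{(3-d)/2}$ of the Dirichlet perturbation must be large enough that, after integrating against the singular normal derivative $\partial_n\lambda_{\bar x}^\eps\sim\eps^{-1}$, the principal part genuinely blows up like $\eps^{(1-d)/2}$, yet small enough that $g_{\bar x}^\eps$ stays inside the gap interval $(g_1,g_2)$ where $a_1-a_2\ge\eta$ is available. Closely related is verifying that the $c$-term, which involves $\nabla u_i$ and is therefore only $L^2$ rather than $L^\infty$ on $\Omega$, is still dominated by the principal part; this hinges on the fact that $\|c_i(x,u_i,\nabla u_i)\|_{L^2}$ is bounded by the a~priori estimate of Assumption~\ref{ass:3e} while $\|\lambda_{\bar x}^\eps\|_{L^2}$ grows only like $|\ln\eps|^{1/2}$ (for $d=2$) or $\eps^{-1/2}$ (for $d=3$), both $o(\eps^{(1-d)/2})$.
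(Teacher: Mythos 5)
Your proposal is correct and is essentially the paper's own argument: the paper proves Theorem~\ref{thm:3} precisely by invoking the proof of Theorem~\ref{thm:1} ``with similar reasoning,'' and your adaptation --- dropping the time variable and temporal cutoff, deriving the elliptic analogue of Lemma~\ref{lem:identity} via harmonic test functions, localizing with $g_{\bar x}^\eps = g_1+\gamma\eps^{(3-d)/2}\chi_{\bar x}^\eps$ and $\varphi=\lambda_{\bar x}^\eps$, and contrasting the $\eps^{(1-d)/2}$ blow-up of the principal part with the slower growth of the lower-order terms --- is exactly that argument, carried out with correct scalings in both $d=2$ and $d=3$. Your reverse implication via uniqueness of the weak solution and the pointwise bounds $\underline g \le u(g)\le \overline g$ likewise matches the reasoning behind Theorem~\ref{thm:2}.
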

Let us note that the corresponding problem with $b\equiv0$ and $c(x,u,\nabla u)=c(x)u$ has already been treated in \cite{EPS2014a}, where knowledge of $a$ allowed us to determine coefficient $c\geq 0$ in a second step; see also \cite{Isakov06} for further results in this direction and the comments in Section~\ref{sec:discussion}.

\subsection{Extensions} \label{sec:extensions}

Before we close this section, let us briefly discuss some possible extensions concerning our assumptions on the parameters and the domain. 

\subsubsection{Regularity of the solution, the coefficients, and the boundary data}

A brief inspection of our estimates shows that the assumptions on the coefficients can be relaxed considerably, e.g., only some integrability or growth conditions for the coefficient governing the lower order terms are required. Also the regularity requirement on the Dirichlet data can be relaxed  and estimates for the solution in $L^p$ spaces may be used. We leave the details of such generalizations to the reader. 

\subsubsection{Assumptions on the geometry}
The usual localization argument allows us to deal also with the case that $\Gamma_M$ is not flat but given as the graph of a $C^1$ function. For sufficiently small $\eps_0$, we may assume that $\Gamma_M$ is almost flat and $n(x)=n(\bar x) + o(1)$, which is enough to prove the required estimates with slight modification of the proofs given above. We again leave the details to the reader.

\subsubsection{Boundary conditions}
In our proofs we only require local control of the Dirichlet boundary values on $\Gamma_M$.
Other types of boundary conditions, e.g., of Neumann or Robin type or even nonlinear conditions, can therefore be prescribed on the inaccessible part $\partial\Omega \setminus \Gamma_M$.
In particular, we do not need knowledge of any boundary data on $\partial\Omega \setminus \Gamma_M$. Hence, the data really required for our uniqueness results consists of 
$$
\{(g_{\mid\Gamma_M\times(0,T)},j(g)_{\mid\Gamma_M\times(0,T)})\}_{g \in G},
$$ 
which is sometimes referred to as local Dirichlet-to-Neumann map \cite{KenigSalo2014}.
A quick inspection of our proofs reveals that the additional terms coming from the boundary conditions on the inaccessible part can again be treated as lower order terms and therefore do not influence the validity of our results.

\section{Applications}\label{sec:examples}

The uniqueness results of the previous sections have many practically relevant applications. 
For illustration, we now discuss discuss in some detail problems arising in bioheat transfer, in chemotaxis or urban crime modeling, and in semiconductor device simulation. 
Particular emphasis will be put on the verification of our assumptions 
in the context of these applications.

\subsection{Nonlinear heat transfer}
Heat transfer in biological tissue, for instance in the liver, can be modeled by the following quasilinear heat equation \cite{Rojczyk2015}
\begin{align}\label{eq:heat}
 \partial_t u - \div ( a(u)\nabla u) + c(u_b-u) = 0\text{ on } \Omega \times (0,T).
\end{align}
Here $u$ describes the unknown temperature of the tissue while the blood temperature $u_b$ and the coefficient $c$ are assumed to be known. The heat conduction coefficient $a=a(u)$ is related to the material properties and is, in general, unknown. To close the system, we supplement the problem with the following initial and boundary values
\begin{align}
 u(x,0) &= u_0(x) \text{ in } \Omega,\\
 u(x,t) &= g(x,t) \text{ on } \partial\Omega\times (0,T). \label{eq:heat_bc}
\end{align}
We assume that the Dirichlet data $g \in G$ satisfy the conditions of Assumption~\ref{ass:3} 
with $\underline g = 0$ and $\overline g = u_b$.
As a consequence of maximal regularity results \cite[Theorem 1.1]{Amann2005},
one can show global existence of a unique weak solution $u \in L^2(0,T;H^1(\Omega)) \cap C^0([0,T];L^2(\Omega))$ for any initial value $u_0 \in L^2(\Omega)$, 
provided that $a$ and $u_0$ satisfy the conditions of Assumption~\ref{ass:2}. 
In addition, the uniform bounds
\begin{align*}
 \|u\|_{L^2(0,T;H^1(\Omega))} \le C \left( \|u_0\|_{L^2(\Omega)} + \|g\|_{L^2(0,T;H^{1/2}(\partial\Omega))} \right).
\end{align*}
hold and $0 \le u \le u_b$ on $\Omega\times (0,T)$ if also $0\leq u_0\leq u_b$. 
The last assertion follows from maximum principles which can be applied once solvability is known, since then the coefficient $a(u)$ can be treated as a space-dependent coefficient and the equation becomes linear.

These considerations show that Assumptions~\ref{ass:3} and \ref{ass:4} hold.
By application of Theorem~\ref{thm:2}, we thus obtain the following result.
\begin{corollary} 
Let $a_i:\RR\to\RR$, $i\in\{1,2\}$, be two functions satisfying the above conditions, 
and let $u_i(0)=u_0\in L^2(\Omega)$ be given initial data.
Then 
\begin{align*}
a_1 \equiv a_2 \text{ on } (0,u_b) \quad \Longleftrightarrow \quad j_1(g)\equiv j_2(g) \text{ for all } g \in G.
\end{align*}
\end{corollary}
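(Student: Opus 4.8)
The plan is to apply Theorem~\ref{thm:2} directly, after verifying that all of its hypotheses hold for the bioheat model \eqref{eq:heat}. The final Corollary is essentially a specialization of Theorem~\ref{thm:2} to the case $b \equiv 0$, $c(x,t,u,\nabla u) = c(u_b - u)$, and $d(t,u) = u$, with the common initial datum $u_{0,1} = u_{0,2} = u_0$ and common lower order terms forced by the fact that $c$ and $u_b$ are assumed known. Thus the conceptual work is not in reproving uniqueness, but in checking that the abstract Assumptions~\ref{ass:2}--\ref{ass:4} are satisfied by this concrete equation.

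First I would cast \eqref{eq:heat} into the general form \eqref{eq:par} and identify the coefficients: here $a = a(u)$ is independent of $t$ and $x$, $b \equiv 0$, the reaction term is $c(x,t,u,\nabla u) = c\,(u_b - u)$, and $d(t,u) = u$. I would then verify Assumption~\ref{ass:2}: the bounds $0 < \underline a \le a \le \overline a$ and $a \in W^{1,\infty}$ are imposed hypotheses on the $a_i$, the term $c(u_b - u)$ is affine in $u$ hence $W^{1,\infty}$ on the relevant bounded interval with $c$ and $u_b$ fixed, and $d(u) = u$ is trivially Lipschitz; the bound $u_0 \in L^2(\Omega)$ is assumed. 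Next I would invoke the maximal regularity result \cite[Theorem~1.1]{Amann2005}, already cited in the excerpt, to obtain existence of a unique weak solution $u \in L^2(0,T;H^1(\Omega)) \cap C^0([0,T];L^2(\Omega))$ together with the stated a priori bound, which yields the uniform estimate \eqref{eq:apriori} required in Assumption~\ref{ass:3}, with $C_U$ depending only on the admissible data. Finally, Assumption~\ref{ass:4} follows from the maximum principle: once the solution is known to exist, $a(u)$ may be frozen as a bounded, uniformly elliptic, space-and-time-dependent coefficient, rendering the equation linear, so that $0 \le u_0 \le u_b$ together with $0 \le g \le u_b$ propagates to $0 \le u \le u_b$ on $\Omega \times (0,T)$, giving both uniqueness and the pointwise bound $\underline g \le u \le \overline g$ with $\underline g = 0$, $\overline g = u_b$.

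With these verifications in place, the two hypotheses of Theorem~\ref{thm:2} that go beyond Theorem~\ref{thm:1} are met: Assumption~\ref{ass:4} holds as just argued, and the structural matching $b_1 = b_2 \equiv 0$, $c_1 = c_2$, $d_1 = d_2$, $u_{0,1} = u_{0,2}$ holds because $c$, $u_b$, and $u_0$ are common to both problems by hypothesis. Theorem~\ref{thm:2} then yields the biconditional
\begin{equation*}
a_1 \equiv a_2 \text{ on } (0,u_b) \iff j_1(g) \equiv j_2(g) \text{ on } \Gamma_M \times (0,T) \text{ for all } g \in G,
\end{equation*}
which is exactly the assertion of the Corollary.

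The main obstacle, and the only genuinely nontrivial step, is confirming that the cited maximal regularity theorem of Amann applies under precisely the hypotheses of Assumption~\ref{ass:2} and delivers the uniform bound with a constant independent of the particular coefficient $a_i$ and of $g \in G$; the quasilinearity of \eqref{eq:heat} means the a priori estimate must be uniform over the admissible class rather than solution-dependent. The remaining checks, namely the $W^{1,\infty}$ regularity of the lower order terms and the maximum-principle argument for the two-sided bound, are routine given that $c$ and $u_b$ are fixed and that freezing $a(u)$ linearizes the equation.
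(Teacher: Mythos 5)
Your proposal is correct and follows essentially the same route as the paper: cast the bioheat equation in the general form \eqref{eq:par}, verify Assumptions~\ref{ass:2}--\ref{ass:3} via the maximal regularity result of Amann, obtain Assumption~\ref{ass:4} by freezing $a(u)$ and applying the maximum principle to the resulting linear equation, and then invoke Theorem~\ref{thm:2} with matching lower order terms and initial data. The only blemish is the sign of the time-derivative term (matching \eqref{eq:par} gives $d(t,u)=-u$ rather than $u$), which is immaterial to every step of the verification.
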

In principle, it is again sufficient to know $j_i(g)$ for all $g \in G$ on a part $\Gamma_M$ of the boundary; see Section~\ref{sec:extensions}. 


\subsection{Coupled nonlinear drift-diffusion}
Coupled systems of  non-linear drift-diffusion equations appear in many applications, e.g., in chemotaxis \cite{Perthame2007} or in modeling and prediction of urban crime \cite{Short2008}. 
Here we consider a parabolic elliptic system of the form
\begin{align}\label{eq:chemo1non}
 \partial_t  u -  \div (a( u)\nabla  u + b(u)\nabla V)& = 0\text{ in } \Omega\times(0,T),\\\label{eq:chemo2}
  -\Delta V + V  &= h(u)\text{ in } \Omega\times(0,T).
\end{align}
The function $a(u)$ depends on the system at hand and is usually unknown.
The same is in principle true also for the coefficients $b$ and $h$, which may however be at least partially determined, provided that $a$ is known and that the measurements are sufficiently rich; see \cite{EPS2015a} and Section~\ref{sec:discussion}.
To complete the description of the problem, we further require the boundary conditions
\begin{align}
   u &= g \text{ on } \partial\Omega\times(0,T),\\
   \partial_n V & =  0\text{ on } \partial\Omega\times(0,T),
\end{align}
and we assume knowledge of the initial state for the first variable 
\begin{align}
  u(x,0) &=  u_0(x) \text{ in } \Omega.
\end{align}
Let us now turn to the verification of our assumptions. 
It is well known that for $b(u)=u$, the above system may exhibit blow-up in finite time.
This can be prevented by requiring
\begin{align}\label{eq:ASSCHEMO1}
b(0) = b(1) = 0,
\end{align}
and $u_0 \in C^2(\overline{\Omega})$ with $0\le  u_0  \le 1$ and $g\in G \cap C^\infty(\overline\Omega \times [0,T])$, where we set $\underline g=0$ and $\bar g=1$.
Furthermore, we assume $h \in W^{1,\infty}(\RR)$.
 Using similar arguments as in \cite[Theorem 3.1]{EPS2015a},  Assumptions~\ref{ass:1} and \ref{ass:2} ensure the global existence of a unique solution $(u,V)$ with $u \in L^p(0,T;H^{1}(\Omega))\cap H^{1}(0,T;L^2(\Omega))$ and $V\in L^\infty(0,T;W^{2,p}(\Omega))$. Moreover, the following a priori bound holds
  \begin{align*}
     \|u\|_{L^2(0,T;H^{1}(\Omega))}  \leq C\big( \|u_0\|_{H^{1}(\Omega)} + \|g\|_{H^1(\Omega_T)}\big). 
  \end{align*}
In addition we can apply \cite[Lemma 3.2]{EPS2015a} to ensure that 
\begin{align*}
 0 \le u(x,t) \le 1 \text{ a.e. on } \Omega\times (0,T),
\end{align*}
and thus Assumption~\ref{ass:4} is also satisfied.
%
We can now apply Theorem~\ref{thm:1} by considering only equation \eqref{eq:chemo1non} 
and treating $V$ as an unknown term of lower order. This leads to the following uniqueness result. 
\begin{corollary} 
Let $a_i,\, b_i:\RR\to\RR$, $i\in\{1,2\}$, be functions satisfying Assumption~\ref{ass:2}, let $h_i\in W^{1,\infty}(\RR)$, $i\in\{1,2\}$, with norm bounded by $C_A$ as in Assumption~\ref{ass:2},
and let $ u_{0,i} \in C^2(\overline{\Omega})$ with $0\leq u_{0,i}\leq 1$ be given. 
Then 
\begin{align*}
j_1(g) \equiv j_2(g) \text{ for all } g \in G \implies a_1 \equiv a_2 \text{ on } (0,1).
\end{align*}
If $b_i=b$, $h_i=h$ and $u_{i,0}=u_0$ are known, then the reverse implications holds true as well. 
\end{corollary}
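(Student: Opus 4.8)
The plan is to derive both implications from the general Theorems~\ref{thm:1} and \ref{thm:2} by regarding the first equation \eqref{eq:chemo1non} of the coupled system as a single scalar quasilinear parabolic equation of the type \eqref{eq:par}, in which the elliptic variable $V$ enters only as part of a lower order term. Concretely, for data $a_i,b_i,h_i,u_{0,i}$ and a Dirichlet datum $g\in G$, let $(u_i,V_i)$ be the corresponding solution pair whose existence, uniqueness, and uniform a priori bounds were recorded above following \cite[Theorem 3.1, Lemma 3.2]{EPS2015a}. Then $u_i$ solves \eqref{eq:par} with diffusion coefficient $a(t,u)=a_i(u)$, storage term $d(t,u)=u$, vanishing reaction term $c\equiv0$, and first order coefficient $\tilde b_i(x,t):=b_i(u_i(x,t))\,\nabla V_i(x,t)$ playing the role of $b$.

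The one point that requires verification is that $\tilde b_i$ is an admissible lower order term for the argument behind Theorem~\ref{thm:1}. I would use the regularity $V_i\in L^\infty(0,T;W^{2,p}(\Omega))$ together with the embedding $W^{2,p}\hookrightarrow W^{1,\infty}$ for $p>d$ to obtain $\nabla V_i\in L^\infty(\Omega\times(0,T))$, and combine this with $b_i\in W^{1,\infty}$ and the pointwise bound $0\le u_i\le1$ to conclude that $\tilde b_i$ is bounded in $L^\infty(\Omega\times(0,T))$ uniformly in $g$. This is exactly the integrability invoked in the estimate of Lemma~\ref{lem:estimates}, where the term $(b_1-b_2,\nabla\varphi_{\bar x}^\eps)_\Omega$ is controlled by $\|\tilde b_i\|_{L^\infty}\,\|\nabla\lambda_{\bar x}^\eps\|_{L^1}=O(|\ln\eps|)$ via Lemma~\ref{lem:fundamental}, while the principal part scales like $\eps^{(1-d)/2}$. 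Thus one only needs integrability of $\tilde b_i$ rather than the full strength of Assumption~\ref{ass:2}, as anticipated in Section~\ref{sec:extensions}; in particular the fact that $\tilde b_i$ depends on the unknown $V_i$ instead of being a prescribed function of $(x,t,u)$ is harmless, since Theorem~\ref{thm:1} requires no knowledge of the lower order terms. With this checked, the contrapositive of Theorem~\ref{thm:1} gives the forward implication: $j_1(g)\equiv j_2(g)$ for all $g\in G$ forces $a_1\equiv a_2$ on $(0,1)$, where we read the $u$-dependent coefficient $a_i(u)$ as the $t$-independent coefficient $a_i(t,u)$ of \eqref{eq:par}.

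For the reverse implication I would argue directly rather than through the $\tilde b_i$. Under the additional hypotheses $b_1=b_2$, $h_1=h_2$, $u_{0,1}=u_{0,2}$, the assumption $a_1\equiv a_2$ on $(0,1)$ renders the two coupled systems \eqref{eq:chemo1non}--\eqref{eq:chemo2} identical; uniqueness of the solution pair, which together with the comparison bound $0\le u_i\le1$ is precisely what makes Assumption~\ref{ass:4} hold here, then forces $(u_1,V_1)=(u_2,V_2)$ for every $g\in G$, whence the drift terms coincide and the Neumann fluxes defined in \eqref{eq:conormal} agree. This mirrors the specialization of Theorem~\ref{thm:2} to $c\equiv0$, $d(t,u)=u$ with matching first order data. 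I expect the main difficulty to lie not in any new estimate but in the bookkeeping of the first paragraph, namely confirming that the coupling through $V$ leaves intact all the structural hypotheses under which Theorems~\ref{thm:1} and \ref{thm:2} were established, in particular that the uniform bounds of Assumption~\ref{ass:3} hold with the $V$-dependent lower order term.
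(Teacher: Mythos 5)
Your proposal is correct and follows essentially the same route as the paper: the forward implication is obtained by applying Theorem~\ref{thm:1} to equation \eqref{eq:chemo1non} with $b(u)\nabla V$ treated as an unknown lower order term (uniformly bounded via the a priori estimates for the coupled system), and the converse follows from uniqueness of the solution pair $(u,V)$, exactly as the paper argues. Your extra verification that Lemma~\ref{lem:estimates} only needs integrability of this $V$-dependent term merely spells out what the paper delegates to the remark in Section~\ref{sec:extensions}.
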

Similar as in Theorem~\ref{thm:2}, the reverse direction follows directly from the uniqueness of the solution to the parabolic elliptic system for given parameters and initial values.

\subsection{Nonlinear Poisson-Nernst-Planck systems}
Our arguments can also be applied to more complicated systems of partial differential equations. 
To illustrate this, let us consider the Poisson-Nernst-Planck system which arises in models for semiconductors \cite{Markowich1990} and also describes ionic fluxes through biological and synthetic channels \cite{Eisenberg2007,Levitt1991}. 
Here we consider a system with non-linear diffusion coefficients which is relevant for high concentration densities \cite{Juengel1994}. The model equations then read
\begin{align}\label{eq:PNP1}
 \partial_t u &= \div ( a(u) \nabla u + u\nabla V) \text{ in }\Omega\times (0,T)\\ \label{eq:PNP2}
 \partial_t \tilde u &= \div ( \tilde a(\tilde u) \nabla \tilde u - \tilde u\nabla V) \text{ in }  \Omega\times (0,T)\\ \label{eq:PNP3}
 -\Delta V &= u-\tilde u + \xi \text{ in }  \Omega\times (0,T),
\end{align}
where $u$, $\tilde u$ are concentrations of positive and negative charges, $V$ is the electric potential, and $\xi$ is a given charge distribution. 
The system is complemented with two initial conditions
\begin{align}
 u(x,0) = u_0(x) \text{ and } \tilde u(x,0) = \tilde u_0(x),
\end{align}
and we prescribe 
\begin{align}
u = g,\quad \tilde u = \tilde g,\text{ on } \Gamma_D\times (0,T), \label{eq:PNPbc2}
\end{align}
on $\Gamma_D \subset \partial \Omega$ which models open ends of the domain connected to respective charge reservoirs. We choose $g,\tilde g \in G$ satisfying $ 0=\underline g \le g,\tilde g \le \overline g$ for appropriate $\overline g>0,$ depending on $T$ and $\|\xi\|_{L^\infty(\Omega)}$. 
Homogeneous Neumann boundary conditions for $u$ and $\tilde u$ are used on $\Gamma_N=\partial\Omega \setminus \Gamma_D$ and appropriate mixed boundary conditions are prescribed for the potential $V$. 

In order to ensure well-posedness of the Poisson-Nernst-Planck system we require additional conditions to hold, namely \cite[(H1)--(H5)]{Juengel1994}; let us refer to \cite[Sec 1.7]{Juengel1994} for a detailed discussion under what conditions on the geometry these assumptions are satisfied.
Assuming \cite[(H1)--(H5)]{Juengel1994}, existence of a unique weak solutions $(u,\tilde u,V) \in [L^2(0,T;H^1(\Omega)\cap L^\infty(\Omega))]^2 \times L^\infty(0,T;W^{2,p}(\Omega))$ has been derived in \cite[Theorem 2.3, 2.4]{Juengel1994}.
In addition, one can obtain uniform bounds for the solution depending only on the bounds for the problem data.  

With similar reasoning as in the previous example, we can consider $\nabla V$ as an unknown lower order term.
In order to separate the influence of $a$ and $\tilde a$ in the flux $j(g_u,g_w)$, we choose either $g_u$ or $g_w$ constant which implies that one of the corresponding primitive functions $A$ or $\tilde A$ vanishes.
\begin{corollary} 
Let the assumptions of \cite[Theorem 2.3, 2.4]{Juengel1994} on the domain, the coefficients,
and the initial data be valid. Furthermore, let $u_i(g,\tilde g)$, $\tilde u_i(g,\tilde g)$, $i=1,2$ denote the weak solutions corresponding to parameters $a_i,\tilde a_i$, and let $j_i(g,\tilde g)$ denote the corresponding fluxes. Then 
\begin{align*}
j_1(g,\tilde g) \equiv j_2(g,\tilde g)  \text{ on } \Gamma_D \times (0,T) \text{ for all } g,\,\tilde g \in G
&\implies a_1 \equiv a_2\text{ and } \tilde a_1 \equiv \tilde a_2  \text{ on } (\underline g,\overline g).
\end{align*}
\end{corollary}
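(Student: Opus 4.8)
The plan is to reduce the identification of the two diffusion coefficients $a$ and $\tilde a$ to two applications of Theorem~\ref{thm:1}, one per species, treating the drift induced by the potential $V$ as an unknown lower order term. First I would recast the drift-diffusion equations \eqref{eq:PNP1}--\eqref{eq:PNP2} in the general form \eqref{eq:par}: for the positive charges this amounts to diffusion coefficient $a(u)$, lower order flux $b(x,t,u)=u\nabla V$, reaction $c\equiv 0$ and $d(t,u)=u$, and symmetrically $\tilde b(x,t,\tilde u)=-\tilde u\nabla V$ for the negative charges; the Poisson equation \eqref{eq:PNP3} enters only through $\nabla V$, which I regard as a prescribed $(x,t)$-dependent factor. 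The first task is then to verify Assumptions~\ref{ass:2}--\ref{ass:4} for each equation in isolation: the bounds $0<\underline a\le a\le\overline a$ are assumed, while the required bounds on $b$ and $\tilde b$ follow from the pointwise estimates $0\le u,\tilde u\le\overline g$ together with the regularity $V\in L^\infty(0,T;W^{2,p}(\Omega))$ and the embedding $W^{2,p}\hookrightarrow W^{1,\infty}$ for $p>d$, all provided by \cite[Theorem 2.3, 2.4]{Juengel1994}. Existence, uniqueness, the comparison bounds and the a priori estimate \eqref{eq:apriori} come from the same reference, so Assumptions~\ref{ass:3} and \ref{ass:4} hold.

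Next I would identify $a$ by running the contradiction argument of Section~\ref{sec:proof} on the $u$-equation, neutralising the second species by freezing its boundary data. Since the measured flux $j$ couples the co-normal derivatives of both species, the combined analogue of the identity in Lemma~\ref{lem:identity} carries \emph{two} boundary terms, one built from $A_1-A_2$ and one from $\tilde A_1-\tilde A_2$, together with the lower order volume terms coming from $b,\tilde b,c,d$. I therefore fix $\tilde g$ equal to a constant in $G$ and base the primitive $\tilde A_i(t,s)=\int_{\tilde g}^{s}\tilde a_i(t,\tilde u)\,d\tilde u$ at that constant, so that $\tilde A_i(t,\tilde u_i)=\tilde A_i(t,\tilde g)=0$ on $\Gamma_D$ by \eqref{eq:PNPbc2} and Assumption~\ref{ass:4}; as $\Gamma_M\subset\Gamma_D$ contains the singular region $\Gamma_M\cap B_\eps(\bar x)$, the $\tilde A$-term contributes no singular growth and is bounded by Lemma~\ref{lem:bound_on_lambda}. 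For the varied species I then use the localized Dirichlet data $g_{\bar x}^\eps$ of Lemma~\ref{lem:dirichlet} and the singular harmonic test function $\varphi_{\bar x}^\eps=\lambda_{\bar x}^\eps\chi(t)$. Assuming $a_1(\tilde t,\tilde g)\neq a_2(\tilde t,\tilde g)$ for contradiction, the first estimate of Lemma~\ref{lem:estimates} forces the $A$-term to grow like $\eps^{(1-d)/2}$, while the hypothesis $j_1\equiv j_2$, the drift terms $b_i=u_i\nabla V_i$, the time-derivative term and the frozen $\tilde A$-term all remain of order $|\ln(\eps)|$ by the second estimate of Lemma~\ref{lem:estimates} and Lemma~\ref{lem:fundamental}; letting $\eps\to0$ yields a contradiction, so $a_1\equiv a_2$ on $(\underline g,\overline g)$. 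Interchanging the roles of the two species, i.e.\ freezing $g$ and varying $\tilde g$, gives $\tilde a_1\equiv\tilde a_2$ by the identical computation applied to the $\tilde u$-equation.

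The main obstacle is the self-consistent coupling through $V$: unlike in Theorem~\ref{thm:1}, the lower order terms $b_i=u_i\nabla V_i$ are not externally prescribed but depend on the very solutions whose boundary data we perturb, and implicitly on the unknown coefficients via \eqref{eq:PNP3}. What makes the argument go through is that the estimates of Lemma~\ref{lem:estimates} only require these terms to be \emph{uniformly bounded in $\eps$} in the norms dual to $\lambda_{\bar x}^\eps$ and $\nabla\lambda_{\bar x}^\eps$; this follows from the $\eps$-independent a priori bounds on $u,\tilde u$ and $\nabla V$ from \cite{Juengel1994}, which hold uniformly over $g,\tilde g\in G$ because $g_{\bar x}^\eps$ stays in $G$ with the single constant $C_G$ furnished by Lemma~\ref{lem:dirichlet}. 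A secondary point to handle carefully is that the separation trick relies on the non-varied species being genuinely constant near $\bar x$, which is exactly the Dirichlet condition \eqref{eq:PNPbc2} on $\Gamma_D\supset\Gamma_M$; on the Neumann part $\Gamma_N$ the fluxes enter only as bounded far-field contributions controlled by Lemma~\ref{lem:bound_on_lambda}, and the balance $\int_{\partial\Omega}\partial_n\varphi_{\bar x}^\eps\,ds=0$ from harmonicity ensures that the base-point normalisation of the primitives is immaterial.
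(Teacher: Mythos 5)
Your proposal is correct and takes essentially the same route as the paper: recast \eqref{eq:PNP1}--\eqref{eq:PNP2} in the form \eqref{eq:par} with $u\nabla V$ as an unknown lower order term controlled uniformly by the a priori bounds of \cite{Juengel1994}, and freeze one species' Dirichlet datum at a constant so that the corresponding primitive ($A$ or $\tilde A$) vanishes on $\Gamma_D$ --- exactly the separation device the paper uses --- before running the contradiction argument of Theorem~\ref{thm:1} for each coefficient in turn. Your explicit treatment of the self-consistent coupling through $V$ (uniformity in $\eps$ of the lower order bounds) and of the contributions from $\Gamma_N$ fills in details the paper leaves implicit, but does not change the underlying argument.
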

It should be clear now that with similar reasoning one can determine diffusion coefficients in rather general systems of parabolic elliptic type.

\section{Discussion} \label{sec:discussion}

In this paper, we considered the identification of diffusion coefficient functions $a(t,u)$ and $a(u)$ in parabolic and elliptic partial differential equations. Using rather general arguments based on singular functions, we were able to establish uniqueness from local observation of the Dirichlet-to-Neumann map even in the presence of unknown lower order terms. This allows to apply our results to rather general problems, which was demonstrated in examples. 

Knowledge of the leading order coefficient can eventually be used to obtain uniqueness also for the coefficients in lower order terms in a second step. 
For illustration of the main idea, let us consider an elliptic problem of the form
\begin{align*} 
-\div (a(u) \nabla u) + c(x,u,\nabla u) &= 0 \qquad \text{in } \Omega,\\
u &= g \qquad \text{on } \partial\Omega.
\end{align*}
The knowledge of the diffusion coefficient $a(u)$ allows to introduce a new variable $w=A(u)$, 
with $A(u)=\int_0^u a(g) dg$, and to transform the problem equivalently into 
\begin{align*}
-\Delta w + \tilde c(x,w,\nabla w) &= 0 \qquad \text{in } \Omega, \\
w &= \tilde g \qquad \text{on } \partial\Omega,
\end{align*}
with $\tilde g = A(g)$ and $\tilde c(x,w,\nabla w) = c(x,A^{-1}(w),(1/a(A^{-1}(w))\nabla w)$. 
The transformation $w=A(u)$ is a diffeomorphism and therefore also the Dirichlet-to-Neumann map 
transforms equivalently. Some special problems of this form have already been treated successfully: The identification of $c(x,w)$ has been addressed successfully by Isakov and Sylvester~\cite{Isakov1994} and uniqueness for $c(x,\nabla w)$ has been established by Sun~\cite{Sun04}. The uniqueness of a coefficient $c(w,\nabla w)$ is shown in \cite{Isakov01}.
The results of this paper may therefore be valuable as one basic ingredient for the proof of uniqueness of several parameters in rather general parabolic and elliptic problems. 

\section*{Acknowledgements}
HE acknowledges support by DFG via Grant IRTG~1529, GSC~233, TRR~154, and Eg-331/1-1. 
The work of JFP was supported by the DFG via Grant Pi-1073/1-2. The authors would like to thank Michael Winkler (Paderborn) and Ansgar J\"ungel (Vienna) for useful hints to literature.

\bibliographystyle{abbrv}
\bibliography{bib}

\begin{thebibliography}{10}

\bibitem{Alessandrini90}
G.~Alessandrini.
\newblock Singular solutions of elliptic equations and the determination of
  conductivity by boundary measurements.
\newblock {\em Journal of Differential Equations}, 84:252--272, 1990.

\bibitem{Amann2005}
H.~Amann.
\newblock Maximal regularity and quasilinear parabolic boundary value problems.
\newblock In C.-C. Chen, M.~Chipot, and C.-S. Lin, editors, {\em Recent
  Advances in Elliptic and Parabolic Problems}, pages 1--17. World Scientific,
  2005.

\bibitem{Calderon80}
A.-P. Calder{\'o}n.
\newblock On an inverse boundary value problem.
\newblock In {\em Seminar on {N}umerical {A}nalysis and its {A}pplications to
  {C}ontinuum {P}hysics}, pages 65--73. Soc. Brasil. Mat., Rio de Janeiro,
  1980.

\bibitem{Cannon1967}
J.~Cannon.
\newblock Determination of the unknown coefficient $k(u)$ in the equation
  $\nabla\cdot k(u)\nabla u = 0$ from overspecified boundary data.
\newblock {\em Journal of Mathematical Analysis and Applications}, 18(1):112 --
  114, 1967.

\bibitem{Cannon1973}
J.~R. Cannon and P.~Duchateau.
\newblock Determining unknown coefficients in a nonlinear heat conduction
  problem.
\newblock {\em SIAM J. Appl. Math.}, 24:298--314, 1973.

\bibitem{Cannon1980}
J.~R. Cannon and P.~DuChateau.
\newblock An inverse problem for a nonlinear diffusion equation.
\newblock {\em SIAM J. Appl. Math.}, 39(2):272--289, 1980.

\bibitem{Cannon1989}
J.~R. Cannon and H.~Yin.
\newblock A class of non-linear non-classical parabolic equations.
\newblock {\em Journal of Differential Equations}, 79:266--288, 1989.

\bibitem{Cortazar1990}
C.~Cort{\'a}zar and M.~Elgueta.
\newblock A monotonicity result related to a parabolic inverse problem.
\newblock {\em Inverse Problems}, 6(4):515--521, 1990.

\bibitem{DuChateau1981}
P.~DuChateau.
\newblock Monotonicity and uniqueness results in identifying an unknown
  coefficient in a nonlinear diffusion equation.
\newblock {\em SIAM J. Appl. Math.}, 41(2):310--323, 1981.

\bibitem{DuChateauRundell85}
P.~DuChateau and W.~Rundell.
\newblock Unicity in an inverse problem for an unknown reaction tem in a
  reaction-diffusion equation.
\newblock {\em Journal of Differential Equations}, 59:155--164, 1985.

\bibitem{DuChateau2004}
P.~DuChateau, R.~Thelwell, and G.~Butters.
\newblock Analysis of an adjoint problem approach to the identification of an
  unknown diffusion coefficient.
\newblock {\em Inverse Problems}, 20:601--625, 2004.

\bibitem{EPS2014b}
H.~Egger, J.-F. Pietschmann, and M.~Schlottbom.
\newblock Numerical identification of a nonlinear diffusion law via
  regularization in {H}ilbert scales.
\newblock {\em Inverse Problems}, 30(2):025004, 14, 2014.

\bibitem{EPS2014a}
H.~Egger, J.-F. Pietschmann, and M.~Schlottbom.
\newblock Simultaneous identification of diffusion and absorption coefficients
  in a quasilinear elliptic problem.
\newblock {\em Inverse Problems}, 30(3):035009, 8, 2014.

\bibitem{EPS2015a}
H.~Egger, J.-F. Pietschmann, and M.~Schlottbom.
\newblock Identification of chemotaxis models with volume-filling.
\newblock {\em SIAM Journal on Applied Mathematics}, 75(2):275--288, 2015.

\bibitem{Eisenberg2007}
B.~Eisenberg and W.~Liu.
\newblock {P}oisson--{N}ernst--{P}lanck systems for ion channels with permanent
  charges.
\newblock {\em SIAM Journal on Mathematical Analysis}, 38(6):1932--1966, 2007.

\bibitem{Evans98}
L.~C. Evans.
\newblock {\em Partial Differential Equations}, volume~19 of {\em Graduate
  Studies in Mathematics}.
\newblock American Mathematical Society, 1998.

\bibitem{Harrach09}
B.~Harrach.
\newblock On uniqueness in diffuse optical tomography.
\newblock {\em Inverse Problems}, 25(5):055010 (14pp), 2009.

\bibitem{Isakov93}
V.~Isakov.
\newblock On uniqueness in inverse problems for semilinear parabolic equations.
\newblock {\em Archive for Rational Mechanics and Analysis}, 124:1--12, 1993.

\bibitem{Isakov93b}
V.~Isakov.
\newblock Uniqueness and stability in multi-dimensional inverse problems.
\newblock {\em Inverse Problems}, 9(6):579--621, 1993.

\bibitem{Isakov01}
V.~Isakov.
\newblock Uniqueness of recovery of some quasilinear partial differential
  equations.
\newblock {\em Communications in Partial Differential Equations},
  26:1947--1973, 2001.

\bibitem{Isakov06}
V.~Isakov.
\newblock {\em Inverse Problems for Partial Differential Equations}, volume 127
  of {\em Applied Mathematical Sciences}.
\newblock Springer Science+Business Media, 2006.

\bibitem{Isakov1994}
V.~Isakov and J.~Sylvester.
\newblock Global uniqueness for a semilinear elliptic inverse problem.
\newblock {\em Communications on Pure and Applied Mathematics},
  47(10):1403--1410, 1994.

\bibitem{Juengel1994}
A.~J\"ungel.
\newblock On the existence and uniqueness of transient solutions of a
  degenerate nonlinear drift-diffusion model for semiconductors.
\newblock {\em Mathematical Models and Methods in Applied Sciences},
  04(05):677--703, 1994.

\bibitem{KenigSalo2014}
C.~Kenig and M.~Salo.
\newblock Recent progress in the {C}alder{\'o}n problem with partial data.
\newblock In {\em Inverse problems and applications}, volume 615 of {\em
  Contemp. Math.}, pages 193--222. Amer. Math. Soc., Providence, RI, 2014.

\bibitem{KlibanovTimonov04}
M.~V. Klibanov and A.~Timonov.
\newblock {\em Carleman estimates for coefficient inverse problems and
  numerical applications}.
\newblock Inverse and Ill-posed Problems Series. VSP, Utrecht, 2004.

\bibitem{Levitt1991}
D.~G. Levitt.
\newblock General continuum theory for multiion channel. i. theory.
\newblock {\em Biophysical Journal}, 59(2):271, 1991.

\bibitem{Lorenzi1986}
A.~Lorenzi.
\newblock An inverse problem for a quasilinear parabolic equation.
\newblock {\em Ann. Mat. Pura Appl. (4)}, 142:145--169 (1986), 1985.

\bibitem{Markowich1990}
P.~A. Markowich, C.~A. Ringhofer, and C.~Schmeiser.
\newblock {\em Semiconductor equations}.
\newblock Springer-Verlag, Vienna, 1990.

\bibitem{Perthame2007}
B.~Perthame.
\newblock {\em Transport equations in biology}.
\newblock Frontiers in Mathematics. Birkh\"auser Verlag, Basel, 2007.

\bibitem{Rojczyk2015}
M.~Rojczyk, H.~R.~B. Orlande, M.~J. Colaco, I.~Szczygiel, A.~J. Nowak, R.~A.
  Bialecki, and Z.~Ostrowski.
\newblock Inverse heat transfer problems: an application to bioheat transfer.
\newblock {\em Journal Computer Assisted Methods in Engineering and Science},
  22(4):365--383, 2015.

\bibitem{Short2008}
M.~B. Short, M.~R. D'Orsogna, V.~B. Pasour, G.~E. Tita, P.~J. Brantingham,
  A.~L. Bertozzi, and L.~B. Chayes.
\newblock A statistical model of criminal behavior.
\newblock {\em Mathematical Models and Methods in Applied Sciences},
  18(supp01):1249--1267, 2008.

\bibitem{Sun04}
Z.~Sun.
\newblock Inverse boundary value problems for a class of semilinear elliptic
  equations.
\newblock {\em Adv. Appl. Math.}, 32:291--800, 2004.

\bibitem{Uhlmann2009}
G.~Uhlmann.
\newblock Electrical impedance tomography and {C}alder{\'o}n's problem.
\newblock {\em Inverse Problems}, 25(12):123011, 2009.

\bibitem{Yamamoto09}
M.~Yamamoto.
\newblock Carleman estimates for parabolic equations and applications.
\newblock {\em Inverse Problems}, 25(12):123013, 75, 2009.

\end{thebibliography}

\end{document}